\long\def\@savemarbox#1#2{\global\setbox#1\vtop{\hsize\marginparwidth 
  \@parboxrestore\tiny\raggedright #2}}
\newcommand\lref[1]{\ref{#1}%
\@ifundefined{r@DisplaY #1}{}{ (#1)}}
\newcommand\fakelabel[2]{\@bsphack\if@filesw {\let\thepage\relax
   \newcommand\protect{\noexpand\noexpand\noexpand}%
\xdef\@gtempa{\write\@auxout{\string
      \newlabel{#1}{{#2}{\thepage}}}}}\@gtempa
   \if@nobreak \ifvmode\nobreak\fi\fi\fi\@esphack}
\def\Empty{}
\newcommand\oplabel[1]{
  \def\OpArg{#1} \ifx \OpArg\Empty {} \else
        \label{#1}
  \fi}
\newtheorem{theoremSt}{Theorem}[section]
\newtheorem{exampleSt}[theoremSt]{Example}
\newtheorem{exerciseSt}[theoremSt]{Exercise}
\newcommand\MakeStEnv[1]{
  \newenvironment{#1}[1]{
  \begin{#1St} \oplabel{##1}%
  \global\def\CrntSt{\thetheoremSt}%
}{ 
  \end{#1St} }
  \newenvironment{#1+}[1]{
  \begin{#1St} \label{##1}%
  \label{DisplaY ##1}%
  \global\def\CrntSt{\thetheoremSt}%
  \def\Labl{##1}\ifx\Labl\Empty{} \else {\em (\Labl)\,}\fi%
}{ 
  \end{#1St} }
}
\newlength{\saveu}
\newenvironment{pf*}[1]{%
 \begin{proof}[#1]%
}{ 
 \end{proof}
}
\newcommand{\finishproof}[1]{ 
  \def\FPArg{#1}
  \ifx\FPArg\Empty
        \newcommand\FPArg{\CrntSt}  \fi
  \smallbreak\noindent\makebox[\textwidth]{\hfill\fbox{\FPArg}}
  \medbreak\noindent
}
\newcommand\CC{{\mathcal C}}
\newcommand\FF{{\mathcal F}}
\newcommand\GG{{\mathcal G}}
\newcommand\LL{{\mathcal L}}
\newcommand\MM{{\mathcal M}}
\newcommand\NN{{\mathcal N}}
\newcommand\PP{{\mathcal P}}
\newcommand\QQ{{\mathcal Q}}
\newcommand\UU{{\mathcal U}}
\newcommand\PMF{{\PP\kern-2pt\MM\FF}}
\newcommand\PML{{\PP\kern-2pt\MM\LL}}
\newcommand\ep{\epsilon}
\newcommand\hhat{\widehat}
\newcommand\union{\cup}
\newcommand\intersect{\cap}
\newcommand\bbR{{\mathord{\text{I\kern-2pt R}}}}        
\newcommand\bbH{{\mathord{\text{I\kern-2pt H}}}}        
\newcommand\C{{\mathbb C}}
\newcommand\Z{{\mathbb Z}}
\newcommand\N{{\mathbb N}}
\newcommand\bigrightarrow[1]{\hbox to #1{\rightarrowfill}}
\newcommand\bigleftarrow[1]{\hbox to #1{\leftarrowfill}}
\newcommand\boundary{\partial}
\newcommand\semidir{\mathrel{\hbox{\vrule depth-.03ex height1.1ex\kern-0.15em$\times$}}}
\newcommand{\diam}{\operatorname{diam}}
\numberwithin{equation}{section}
\def\subsection{\@startsection{subsection}{2}%
  \z@{.5\linespacing\@plus.7\linespacing}{.5em}%
  {\normalfont\bfseries\centering}}
\def\section{\@startsection{section}{1}%
  \z@{.7\linespacing\@plus\linespacing}{.5\linespacing}%
  {\normalfont\large\bfseries\centering}}
\def\subsubsection{\@startsection{subsubsection}{3}%
  \z@{.5\linespacing\@plus.7\linespacing}{-.5em}%
  {\normalfont\bfseries}}
\newcommand{\fsubd}{\mathrel{{\scriptstyle\searrow}\kern-1ex^d\kern0.5ex}}
\newcommand{\bsubd}{\mathrel{{\scriptstyle\swarrow}\kern-1.6ex^d\kern0.8ex}}
\newcommand{\fsubeq}{\mathrel{\raise-.7ex\hbox{$\overset{\searrow}{=}$}}}
\newcommand{\bsubeq}{\mathrel{\raise-.7ex\hbox{$\overset{\swarrow}{=}$}}}
\newcommand{\tsh}[1]{\left\{\kern-.9ex\left\{#1\right\}\kern-.9ex\right\}}
\newcommand{\Tsh}[2]{\tsh{#2}_{#1}}
\def\MCG{\mathcal {MCG}}
\def\co{\colon}
\def\diam{{\rm{diam}}}
\newcommand\truncate[2]{\Tsh{#2}{#1}}
\renewcommand\MCG{\mathcal{MCG}}
\def\hull{\operatorname{hull}}
\title[Centroids and rapid decay 
in mapping class groups]{Centroids and the rapid decay property\\ 
in mapping class groups}
\author{Jason A. Behrstock}
\address{Lehman College, CUNY}
\email{jason.behrstock@lehman.cuny.edu}
\author{Yair N. Minsky}
\address{Yale University}
\email{yair.minsky@yale.edu}
\thanks{Partially supported by NSF grants DMS-0812513 and DMS-0504019}
\begin{document}

\begin{abstract}
We study a notion of an equivariant, Lipschitz, permutation-invariant \emph{centroid} for triples of points in
mapping class groups $\MCG(S)$, which satisfies a certain polynomial growth bound.
A consequence (via work of Dru\c tu-Sapir or Chatterji-Ruane) is the Rapid Decay 
Property for $\MCG(S)$. 
\end{abstract}

\maketitle


\section{Introduction}

A finitely generated group has the 
\emph{Rapid Decay property}\footnote{This property is also sometimes 
called \emph{property RD} or the \emph{Haagerup inequality}} 
if the 
space of rapidly decreasing functions on $G$ (with respect to every  
word metric) is inside the reduced $C^{*}$--algebra of $G$ (see the
end of 
section \ref{background} for a more detailed definition). Rapid Decay
was first introduced for the free group by Haagerup 
\cite{Haagerup}. Jolissaint then formulated this property in  
its modern form and established it for several classes of groups, 
including 
groups of polynomial growth and discrete cocompact subgroups of 
isometries of hyperbolic space \cite{Jolissaint:RD}. Jolissaint also 
showed that many groups, for instance $SL_{3}(\Z)$, fail to have the Rapid 
Decay property \cite{Jolissaint:RD}. 
Rapid Decay was 
established for Gromov-hyperbolic groups by de la Harpe 
\cite{delaHarpe:hyperbolicRD}.

Throughout this paper $S=S_{g,p}$ will denote a compact orientable 
surface with genus $g$ and $p$ punctures. The mapping class group of 
$S$, denoted $\MCG(S)$, is the group of isotopy classes of 
orientation preserving homeomorphisms of $S$. We will prove:

\begin{theorem}{rapid decay}
  $\MCG(S)$ has the Rapid Decay property for every compact orientable 
  surface $S$.
\end{theorem}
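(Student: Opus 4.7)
The plan is to apply the criterion of Chatterji--Ruane (and independently Dru\c{t}u--Sapir): a finitely generated group $G$ has the Rapid Decay property provided it admits a centroid map $\mu\colon G^3 \to G$ (defined up to bounded error) that is $G$-equivariant for the diagonal action, $S_3$-invariant under permutations of the inputs, coarsely Lipschitz, and has polynomial fiber growth, meaning the number of triples in a ball of radius $r$ mapping to any fixed $m$ is at most polynomial in $r$. The entire task will therefore be to construct such a centroid on $\MCG(S)$.

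The construction will use the Masur--Minsky hierarchical geometry of the mapping class group. For each isotopy class of essential subsurface $Y \subseteq S$, the subsurface projection $\pi_Y\colon \MCG(S)\to \CC(Y)$ lands in a $\delta$-hyperbolic space, so the projected triple $(\pi_Y(g_1),\pi_Y(g_2),\pi_Y(g_3))$ admits a coarse centroid $c_Y$, well-defined up to bounded error and symmetric in the inputs. The collection $\{c_Y\}_{Y\subseteq S}$ will automatically satisfy the Behrstock consistency inequalities, inherited from the projections $\pi_Y(g_i)$. We will then invoke a realization theorem in the spirit of Masur--Minsky to produce a mapping class $\mu(g_1,g_2,g_3)$ whose subsurface projections agree with the $c_Y$ up to bounded error. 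Equivariance and $S_3$-invariance are visible from the naturality and symmetry of each step.

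The coarse Lipschitz bound will read off from the Masur--Minsky distance formula
$$d_{\MCG(S)}(x,y) \simmult \sum_{Y \subseteq S} \Tsh{k}{d_{\CC(Y)}(\pi_Y(x),\pi_Y(y))},$$
combined with the standard coarse-centroid inequalities in each hyperbolic $\CC(Y)$, applied term by term and summed. This step is essentially formal once the construction is in place.

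The main obstacle, and the heart of the paper, is the polynomial fiber bound, which pushes past soft hyperbolic reasoning and into finer hierarchy combinatorics. If the $g_i$ all lie in a ball of radius $r$ about the identity and share centroid $m$, then each $\pi_Y(g_i)$ is constrained to lie within $O(r)$ of $\pi_Y(m)$ in $\CC(Y)$, and the distance formula forces the aggregate ``total size'' of active subsurfaces to be $O(r)$. Counting admissible triples subsurface-by-subsurface would naively produce an exponential count over the collection of active $Y$; what must be shown is that Behrstock's consistency inequalities, together with the restricted combinatorics of when several subsurfaces may simultaneously carry large projections, tie the per-subsurface choices together tightly enough for the joint count to collapse to a single polynomial in $r$. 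Once this bound is in hand, Theorem~\ref{rapid decay} will follow immediately from the Chatterji--Ruane/Dru\c{t}u--Sapir criterion.
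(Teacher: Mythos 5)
Your overall architecture --- take coarse centroids of the projected triples in each hyperbolic $\CC(Y)$, check the consistency conditions, realize the resulting tuple as a point of $\MCG(S)$, and read off equivariance, symmetry and the Lipschitz property from the distance formula --- is exactly the paper's construction (Theorem \ref{centroid from projections}). But the counting condition you propose to verify is not the right one, and in fact is false. You ask for ``polynomial fiber growth: the number of triples in a ball of radius $r$ mapping to a fixed $m$ is at most polynomial in $r$.'' Since $\MCG(S)$ has exponential growth, the fibers of any centroid map are exponentially large: already for a free group, the set of pairs $(x,z)\in B_r\times B_r$ whose median with $1$ equals $1$ has positive density in $B_r\times B_r$. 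The condition actually needed for the Dru\c{t}u--Sapir/Chatterji--Ruane criterion is a bound on the \emph{image}, not the fibers: with two vertices $x,y$ fixed, the number of \emph{distinct values} $\kappa(x,y,z)$ as $z$ ranges over $\NN_r(x)$ must be polynomial in $r$ (this is how the convolution sum $f*g(z)=\sum_x f(x)g(x^{-1}z)$ gets reorganized over the polynomially many centroids $t=\kappa(1,x,z)$ before Cauchy--Schwarz). Your subsequent sketch (``triples sharing centroid $m$'') is aimed at the fiber statement, so as written the plan verifies a hypothesis that cannot hold and omits the one that must.

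Even after correcting the statement, the counting step is where essentially all the work lies, and your proposal only gestures at it. The paper's route is: (i) show every centroid $\kappa(x,y,z)$ with $d(x,z)\le r$ lies in $\Sigma_{\ep'}(x,q)$ for a single point $q$ with $d(x,q)\le ar$ (Lemma \ref{reduce to Sigma}, itself an application of the Consistency Theorem to the ``farthest progress points'' $m_W$ on each $[x,y]_W$); and (ii) prove $\#\Sigma_\ep(x,y)\le c\,d(x,y)^{\xi(S)}$ by induction on complexity, covering the hull by product regions $\GG(U,x,y)$ attached to $\prec_2$--minimal subsurfaces and by points along the main tight geodesic, with the hierarchy/footprint machinery controlling the diameter sum of the cover. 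That inductive covering is where the exponent $\xi(S)$ comes from and where the ``exponential count over active subsurfaces'' you worry about is actually defeated; it is not a formality that can be waved through.
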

\noindent 
The only previously known cases of this theorem were in low
complexity when the mapping class group is hyperbolic (these are tori 
with at most one puncture, or spheres with at most 4 punctures) 
and for the
braid group on four strands, which was recently established by Barr\'e
and Pichot \cite{BarrePichot:4strandbraid}. 
The results in this paper 
also hold 
for the braid 
group on any number of strands. (The case of braid groups follows from 
the above theorem, since 
braid groups are subgroups of mapping class groups of surfaces 
\cite[Theorem 2.7.I]{Ivanov:mcg} and  
RD is inherited by subgroups 
\cite[Proposition 2.1.1]{Jolissaint:RD}.)

The Rapid Decay property has several interesting 
applications. For instance, in order to prove the Novikov Conjecture for 
hyperbolic groups, Connes-Moscovici 
\cite[Theorem~6.8]{ConnesMoscovici:hyperbolic} showed that if a 
finitely generated group has the Rapid Decay property and
has group cohomology of polynomial growth (property PC), 
then it satisfies Kasparov's Strong Novikov Conjecture 
\cite{Kasparov:Novikov}. Accordingly, since any 
automatic group has property PC \cite{Meyer:combable},  
Kasparov's Strong Novikov Conjecture follows from the above 
Theorem~\ref{rapid decay} and Mosher's result that mapping 
class groups are automatic \cite{mosher:automatic}. 
The strong Novikov conjecture for $\MCG(S)$ has been previously
established by both Hamenst\"adt  
\cite{Hamenstadt:boundaryamenability} and 
Kida \cite{Kida:MCGmeasureequivviewpoint}.

\medskip

We prove the Rapid Decay property by appealing to a reduction by
Dru\c{t}u-Sapir (alternatively Chatterji-Ruane) to a geometric
condition. Namely, we introduce a notion of 
\emph{centroids} for unordered triples in the mapping class group which satisfies 
a certain polynomial growth property. Despite the presence of large 
quasi-isometrically embedded flat subspaces in the mapping class 
group, these centroids behave much like centers of triangles 
in hyperbolic space. 
Our notion of centroid is provided by the following result 
which to each unordered triple in the mapping class group 
gives a Lipschitz assignment of a point, which has the property that 
it is a centroid in every curve complex projection. We obtain the 
following:

\begin{theorem}{centroid}
For each $S=S_{g,p}$ with $\xi(S) = 3g-3+p \ge 1$
there exists a map $\kappa\co\MCG(S)^3 \to \MCG(S)$ with the following
properties: 
\begin{enumerate}
\item \label{centroid:perm} $\kappa(x,y,z)$ is invariant under permutation of the arguments.
\item \label{centroid:equivariant} $\kappa$ is equivariant.
\item $\kappa$ is Lipschitz.
\item \label{centroid:count} For any $x,y\in\MCG(S)$ and $r>0$ we have 
the following cardinality bound:
$$
\#\left\{\kappa(x,y,z): d(x,z) \le r\strut\right\} \le  b r^{\xi(S)}
$$
where $b$ depends only on $S$. 
\end{enumerate}
\end{theorem}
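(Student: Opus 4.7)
The plan is to build $\kappa$ one subsurface projection at a time using hyperbolicity of each curve complex $\CC(Y)$, then reassemble the data into an honest point of $\MCG(S)$ by a marking--realization argument; the polynomial bound will ultimately come from the fact that $\kappa(x,y,z)$ is forced to lie (coarsely) on a hierarchy path from $x$ to $y$ whose ``width'' is bounded by $\xi(S)$.

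First, for each essential subsurface $Y\subseteq S$, I would define a coarse centroid
$c_Y = c_Y(\pi_Y(x),\pi_Y(y),\pi_Y(z))\in\CC(Y)$
using the fact that $\CC(Y)$ is $\delta$--hyperbolic (Masur--Minsky); a canonical, permutation--symmetric choice can be made, for instance by taking the point on a chosen $[\pi_Y(x),\pi_Y(y)]$ geodesic at distance equal to the Gromov product $(\pi_Y(x)\cdot \pi_Y(z))_{\pi_Y(y)}$ and then symmetrizing over the three vertex orderings. Because subsurface projection is $\MCG(S)$--equivariant and the centroid operation commutes with the action, the collection $\{c_Y\}$ is equivariant; because the projections of $x,y,z$ satisfy the Behrstock--type consistency inequalities and the centroid operation preserves them up to $O(\delta)$, the collection $\{c_Y\}$ can be realized (up to bounded error) as the family of subsurface projections of an actual clean marking $\mu\in\MCG(S)$ by the Behrstock--Kleiner--Leininger--Minsky realization procedure. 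Set $\kappa(x,y,z):=\mu$. Properties (\ref{centroid:perm}), (\ref{centroid:equivariant}), and the Lipschitz property are then immediate: moving $x$, $y$, or $z$ by one generator perturbs every $\pi_Y$ by a bounded amount, hence every $c_Y$ by a bounded amount, and the realization procedure is a coarse inverse to taking projections.

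The substance of the proof is property (\ref{centroid:count}). The key geometric input is the following fact in any $\delta$--hyperbolic space: if $d(a,c)\le r$ then the coarse centroid of $(a,b,c)$ lies within bounded distance of the $[a,b]$ geodesic and, more precisely, at the coarse location where a geodesic from $c$ first lands on $[a,b]$. Applied in each $\CC(Y)$ with $x,y$ fixed and $z$ ranging over $B_r(x)$, this means that $c_Y$ takes at most $O(r)$ coarse values on a geodesic $[\pi_Y(x),\pi_Y(y)]$, and is in fact coarsely equal to $\pi_Y(x)$ unless $Y$ is a subsurface at which the hierarchy from $x$ to $y$ is genuinely active and $d_Y(x,z)$ is comparable to $d_Y(x,y)$. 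Thus $\kappa(x,y,z)$ is pinned, up to bounded error, to an actual hierarchy path from $x$ to $y$, and its position on that path is recorded by a tuple of centroid positions in the active curve complex projections. The Behrstock inequality bounds the number of pairwise--overlapping active subsurfaces at any stage by $\xi(S)$, which converts the aggregate count, via the Masur--Minsky distance formula, into a product of at most $\xi(S)$ factors each of size $O(r)$, yielding the desired bound $br^{\xi(S)}$.

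The main obstacle is precisely this last step: although infinitely many subsurfaces $Y$ can \emph{a priori} contribute a nonzero centroid shift, one must show that the effective degrees of freedom in which $\kappa(x,y,z)$ can move as $z$ varies in $B_r(x)$ are bounded by $\xi(S)$, and that the corresponding counts multiply rather than merely sum. This is where the hierarchy structure, the Behrstock inequality, and the choice of centroid as a point on the $x$-to-$y$ geodesic must be combined with care; everything else in the argument should be a formal consequence.
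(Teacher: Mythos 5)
Your construction of $\kappa$ is essentially the paper's: take a coarse centroid of $(\pi_Y(x),\pi_Y(y),\pi_Y(z))$ in each hyperbolic $\CC(Y)$, check that the resulting tuple satisfies the consistency conditions, and realize it as a marking via the Consistency Theorem of \cite{BKMM}; permutation-invariance, equivariance, and the Lipschitz property then follow as you say. That half is sound.

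The gap is in property (4), and you have in effect flagged it yourself: the assertion that the ``effective degrees of freedom'' are bounded by $\xi(S)$ and that ``the corresponding counts multiply rather than merely sum'' is precisely the theorem to be proved, not a formal consequence of the Behrstock inequality. Two specific problems. First, the Behrstock inequality does not bound the number of pairwise-overlapping active subsurfaces by $\xi(S)$; it \emph{time-orders} overlapping subsurfaces. What is bounded by (roughly $2$ times) $\xi(S)$ is a collection of pairwise disjoint or nested subsurfaces, and the multiplication of counts in the paper comes from an entirely different source: the product regions $\QQ(\partial U)\simeq \MM(U)\times\MM(U^c)$ with $\xi'(U)+\xi'(U^c)=\xi(S)$, together with an induction on complexity. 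Second, even granting a local $O(r)$ count in each $\CC(Y)$, infinitely many subsurfaces can be active, so one must produce a global covering whose pieces satisfy the polynomial bound inductively \emph{and} whose diameters sum to $O(d(x,y))$. The paper does this by first reducing to counting $\Sigma_\ep(x,y)\cap\NN_r(x)$, constructing (via consistency) a point $q$ with $d(x,q)=O(r)$ so that this set sits in $\Sigma_{\ep'}(x,q)$, and then covering the $\Sigma$--hull by sets $\GG(U,x,y)$ indexed by $\prec_2$--minimal subsurfaces of $\FF_3(\mu,y)$ plus degenerate sets along the main geodesic; the Covering Lemma, the Inductive Bound Lemma, and the Diameter Sum Bound (the last using footprints of tight geodesics in a hierarchy) are each substantive arguments that your sketch does not supply. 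Without them the claimed bound $br^{\xi(S)}$ is unproved.
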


These properties, and especially the count provided by part (4), are
essentially Dru\c{t}u and Sapir's 
condition of {\em  (**)-relative hyperbolicity} with respect to the 
trivial subgroup \cite{DrutuSapir:RD}, 
and the main theorem of \cite{DrutuSapir:RD} states that this 
condition implies the Rapid Decay property. 
Thus to obtain Theorem~\ref{rapid decay} from Theorem \ref{centroid} 
we appeal to \cite{DrutuSapir:RD}, without dealing directly 
with the Rapid Decay property itself.

\subsection*{Outline of the proof}

Let us first recall the situation for a hyperbolic group, $G$. In 
this setting, for a triple of points $x,y,z\in G$, one defines a 
centroid for the triangle with vertices $x,y,z$ to be a point 
$\kappa$ with the property that $\kappa$ is in the 
$\delta$--neighborhood of any geodesics $[x,y],[y,z],[x,z]$, where 
$\delta$ is the hyperbolicity constant for $G$ considered with some 
fixed word metric. Thus, if 
one fixes $x$ and $y$ and allows $z$ to vary in the  
ball of radius $r$ around $x$, the 
corresponding centroid must lie in a $\delta$--neighborhood of the 
length $r$ initial segment of $[x,y]$. It follows that the 
number of such centers is linear in $r$.

When $3g+p-3>1$, then $\MCG(S_{g,p})$ is not hyperbolic. 
Nonetheless, it has a closely associated space, 
the complex of curves, $\CC(S)$, which is hyperbolic 
\cite{MasurMinsky:complex1}. Moreover, given any subsurface $W\subseteq S$, 
there is a geometrically defined \emph{projection map}, $\pi_{W}$, 
from the mapping class group of $S$ to the curve complex of $W$.

For any $x,y,z\in\MCG(S)$, in Theorem~\ref{centroid from 
projections}, we construct a centroid $\kappa(x,y,z)$ 
with the property that for each $W\subset S$, in the hyperbolic space
$\CC(W)$ the point $\pi_W(\kappa(x,y,z))$ is a centroid of the 
triangle with vertices $\pi_W(x)$, $\pi_W(y)$, and $\pi_W(z)$.

Due to the lack of hyperbolicity in $\MCG(S)$, if one were to fix 
ahead of time a geodesic $[x,y]$, it need not be the case that the 
center $\kappa(x,y,z)$ is close to $[x,y]$. For this reason, we do 
not fix a geodesic between $x$ and $y$, but rather we use the 
notion of a $\Sigma$--hull, as introduced in \cite{BKMM}. The 
$\Sigma$--hull of a finite set is a way of taking the convex 
hull of these points, in particular, the convex hull of a pair of 
points is roughly the union of all geodesics between those points.

In analogy to the fact that for any triangle in a 
Gromov-hyperbolic space any centroid  
is uniformly close to each of the three geodesics, in 
Section \ref{polynomial bounds} we show that in $\MCG(S)$, 
any centroid $\kappa(x,y,z)$ is contained in each  
$\Sigma$--hull between a pair of vertices. 
This reduces the problem of counting centroids 
to counting subsets of the $\Sigma$--hull, which we also do in this 
section.

\medskip

In Section~\ref{background}, we will review the relevant properties 
of surfaces, curve complexes, and mapping class groups. 
In Section \ref{centroid construction}, we will use properties of curve complexes and
$\Sigma$--hulls, as developed in \cite{BKMM}, to construct the Lipschitz,
permutation-invariant centroid
map. In Section \ref{polynomial bounds} we will prove the polynomial bound (3), thus
completing the proof of Theorem \ref{centroid}.

\subsubsection*{Acknowledgements}

The authors would like to thank Indira Chatterji, 
Cornelia Dru\c{t}u, and Mark Sapir for 
useful conversations and for raising our interest in the Rapid Decay 
property. We would like to thank Ken Shackleton for comments on an  
earlier draft. 
Behrstock would also like to thank the 
Columbia University Mathematics Department for their support.

\section{Background}
\label{background}

We recall first some notation and results that were developed in
\cite{MasurMinsky:complex1}, \cite{MasurMinsky:complex2} and \cite{BKMM}. 

\subsubsection*{Surfaces and subsurfaces}
As above, $S=S_{g,p}$ is an oriented
connected surface with genus $g$ and $p$ punctures 
(or boundary components) and 
we measure the complexity of this surface by 
$\xi(S_{g,p}) = 3g-3+p$. An
essential subsurface $W\subseteq S$ is one whose inclusion is
$\pi_1$--injective, and which is not peripheral, i.e., not homotopic to
the boundary or punctures of $S$.  We also consider disconnected
essential subsurfaces, in which each component is essential and no two
are isotopic. For such a subsurface $X$ we define another notion of
complexity $\xi'(X)$ as follows: $\xi'(X) = \xi(X)$ if $X$ is
connected and $\xi(X)\ge 0$, $\xi'(Y)=1$ if $Y$ is an annulus, and
$\xi'$ is additive over components of a disconnected surface. (In
\cite{BehrstockMinsky:rankconj}, $\xi'(S)$ was denoted $r(S)$).  It is
not hard to check that $\xi'$ is monotonic, i.e., $\xi'(X) \le \xi'(Y)$ if
$X\subseteq Y$ is an essential subsurface.  (From now on we implicitly
understand subsurfaces to be essential, and defined up to isotopy.)

If $W$ is a subsurface and $\gamma$ a curve in $S$ we say that
{\em $W$ and $\gamma$ overlap}, or $W\pitchfork \gamma$,  if $\gamma$
cannot be isotoped outside of $W$. We say that two surfaces $W$ and
$V$ overlap, or $W\pitchfork V$, if neither can be isotoped into the
other or into its complement. Equivalently, $W\pitchfork V$ iff
$W\pitchfork \boundary V$ and $V\pitchfork \boundary W$. 

See \cite{BKMM} for a careful discussion of these and related
notions.

\subsubsection*{Curves and markings}
The {\em curve complex}, $\CC(S)$, is a complex whose vertices are
essential simple closed curves up to homotopy, and whose $k$-simplices
correspond to $(k+1)$-tuples of disjoint curves.
Endow the 1-skeleton $\CC_1(S)$ with a path metric
giving each edge length 1. With this metric $\CC_1(S)$ is a $\delta$--hyperbolic
metric space \cite{MasurMinsky:complex1}. 

The definition of $\CC(W)$ is slightly different for $\xi(W)\le 1$:
If $W$ is a torus with at most one puncture then edges correspond to
pairs of curves intersecting once, and if $W$ is a 
sphere with 4 punctures then edges correspond to pairs of vertices
intersecting twice. In all these cases $\CC(W)$ is
isomorphic to the Farey graph. If $W$ is an annulus embedded in a
larger surface $S$ then $W$ admits a natural compactification as an
annulus with boundary and $\CC(W)$ is the set of homotopy classes of
essential arcs in $W$ rel endpoints, with edges corresponding to arcs
with disjoint interior. In this case $\CC(W)$ admits a quasi-isometry
to $\Z$ which takes Dehn twists to translation by 1. 
See \cite{MasurMinsky:complex2} for details. 

The {\em marking graph} $\MM(S)$ is a locally finite, connected graph whose vertices are
{\em complete markings} on $S$ and whose edges are {\em elementary moves.}
A complete marking is a system of closed curves consisting of a {\em
  base}, which is a maximal simplex in $\CC(S)$, together with a
choice of transversal curve for each element of the base, satisfying
certain intersection properties. For a detailed discussion, and proofs
of the properties we will list below, see \cite{MasurMinsky:complex2} or 
\cite{BKMM}. 

We make $\MM(S)$ into a path metric space by again assigning length 1 to
edges. We will denote distance in $\MM(S)$ as $d_{\MM(S)}(\mu,\nu)$, or
sometimes just $d(\mu,\nu)$. 
We will need to use the fact that  $\MCG(S)$ acts on $\MM(S)$, and any orbit map $g \mapsto
  g(\mu_0)$ induces a quasi-isometry from $\MCG(S)$ to $\MM(S)$. 

For an annulus $W\subset S$, we identify $\MM(W)$ with $\Z$, and map
this to $\CC(W)$ via the twist-equivariant quasi-isometry mentioned above. 



\subsubsection*{Projections}
Given a curve in $S$ that intersects essentially a subsurface $W$, we
can apply a surgery to the intersection to obtain a curve in
$W$. This gives a partially-defined map from $\CC(S)$ to $\CC(W)$
which we call a {\em subsurface projection}, and in fact this
construction extends to a system of maps of both curve and marking complexes
that fit into coarsely commutative diagrams:
\begin{equation}
\xymatrix{
\MM(S) \ar[r]^{\pi_S}\ar[d]^{\pi_{\MM(W)}}  & \CC(S)\ar[d]^{\pi_W}\\
\MM(W) \ar[r]^{\pi_W}& \CC(W)\\
}
\end{equation}
Here $W$ is an essential subsurface of $S$, and we follow the
convention of denoting a map $\pi_W$ if its target is $\CC(W)$ and
$\pi_{\MM(W)}$ if its target is $\MM(W)$. The vertical $\pi_W$ is only
partially defined, namely just for those curves in $S$ that intersect
$W$ essentially. The horizontal maps take each marking to an
(arbitrary) vertex of its base (except in the case an an annulus).
By ``coarsely commutative'' we mean that the diagram
commutes up to errors bounded by a constant depending only on the
topological type of $S$.  We will also use the fact that
$\pi_{\MM(W)}$ is coarse-Lipschitz, with constants depending on the
topological type of $S$. Similarly, $\pi_W$ is coarse Lipschitz in a
restricted sense: if $a,b\in\CC(S)$ both intersect $W$ and $d_S(a,b) \le
1$ then $d_W(\pi_W(a),\pi_W(b)) \le 3$.  (Again, see \cite{MasurMinsky:complex2} and
\cite{BKMM} for details.)

\subsubsection*{Quasidistance formula}
In \cite{MasurMinsky:complex2} an approximation formula for
distances in $\MM(S)$ is obtained.  To state this,  define
the \emph{threshold function} $\truncate{x}{A}$ to be $x$ if $x\ge A$ and
$0$ otherwise. We define $x\approx y$ to mean $x\le ay+b$ and $y\le
ax+b$, where in the sequel 
$a$ and $b$ will typically be constants depending only on the
topological type of $S$ or on previously chosen constants.

For $\mu,\mu'\in \MM(S)$ and $W\subseteq S$, we define the abbreviation
$$
d_W(\mu,\mu') = d_{\CC_1(W)}(\pi_W(\mu),\pi_W(\mu')).
$$
We then have: 

\begin{theorem}{distance formula}{\rm (Quasidistance formula).}
There exists a constant $A_0 \ge 0$ depending only on the topology of
$S$ such that for each $A \ge A_0$, and for any $\mu,\mu' \in \MM(S)$ 
we have
$$d_{\MM(S)}(\mu,\mu')
\approx \sum_{Y\subseteq S} \truncate{d_{Y}(\mu,\mu')}{A}
$$
and the constants of approximation depend only on $A$ and on the 
topological type of~$S$. 
\end{theorem}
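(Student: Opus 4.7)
The plan is to establish the two inequalities implicit in $\approx$ separately.

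For the upper bound $\sum_{Y\subseteq S} \truncate{d_Y(\mu,\mu')}{A} \le a\, d_{\MM(S)}(\mu,\mu') + b$, I would work along a geodesic $\mu = \nu_0, \nu_1, \ldots, \nu_n = \mu'$ in $\MM(S)$. The coarse Lipschitz property of $\pi_Y$ gives $d_Y(\nu_i,\nu_{i+1}) \le C$ for each $i$ and every subsurface $Y$. The crucial combinatorial observation is that an elementary move between adjacent markings changes only one base curve together with its transversal, so only those subsurfaces whose boundary overlaps these specific curves can register a nontrivial change in projection; this bounds the number of subsurfaces contributing per step by a constant depending only on $S$. Choosing $A_0$ large enough that the threshold eliminates any contribution smaller than the per-step jump, summing over $i$ yields the bound.

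For the lower bound $d_{\MM(S)}(\mu,\mu') \le a \sum_{Y\subseteq S} \truncate{d_Y(\mu,\mu')}{A} + b$, I would invoke the hierarchy machinery of \cite{MasurMinsky:complex2}. Given the pair $\mu,\mu'$, one constructs a hierarchy $H$ of tight geodesics, consisting of a main geodesic in $\CC(S)$ joining (a curve in) $\base(\mu)$ to $\base(\mu')$, together with a nested family of subordinate geodesics in curve complexes of subsurfaces whose boundaries appear as vertices of larger geodesics in $H$. Such a hierarchy admits a \emph{resolution} into an explicit sequence of markings forming a path in $\MM(S)$ whose length is comparable to the total number of vertices appearing in the geodesics of $H$, and whose endpoints lie within bounded distance of $\mu$ and $\mu'$. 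Thus $d_{\MM(S)}(\mu,\mu')$ is controlled by $\sum_{h\in H}|h|$.

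The main obstacle, and the deepest ingredient, is showing that every subsurface $Y$ with a large projection distance $d_Y(\mu,\mu')$ is actually captured by a component geodesic of $H$, so that its contribution appears in the length of the resolution. This is where the \emph{bounded geodesic image theorem} enters: if a $\CC(S)$-geodesic stays at distance at least two from $\pi_S(\boundary Y)$, then $\pi_Y$ is coarsely constant along it. Contrapositively, a large value of $d_Y(\mu,\mu')$ forces $\boundary Y$ to appear close to the main geodesic of $H$, triggering the inclusion of a subordinate geodesic in $\CC(Y)$ of length comparable to $d_Y(\mu,\mu')$. Iterating through the nested structure of the hierarchy and summing across all $Y$ above the threshold $A$ yields $\sum_{h\in H} |h| \lesssim \sum_Y \truncate{d_Y(\mu,\mu')}{A}$, completing the proof.
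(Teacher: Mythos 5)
First, note that the paper does not prove this statement at all: it is quoted as background, with the proof deferred entirely to \cite{MasurMinsky:complex2}, so the comparison here is really with Masur--Minsky's argument. Your second half (the inequality $d_{\MM(S)}(\mu,\mu')\lesssim \sum_Y\truncate{d_Y(\mu,\mu')}{A}$ via hierarchies, resolutions into elementary moves, and the bounded geodesic image theorem forcing every large domain to support a geodesic of the hierarchy) is indeed the structure of their proof, modulo one glossed point: you also need to control the total length contributed by the many geodesics of $H$ whose supports fall \emph{below} the threshold $A$, which is a separate counting lemma in \cite{MasurMinsky:complex2} and not a consequence of the large-link statement you describe.

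The genuine gap is in your first half. The claim that ``only those subsurfaces whose boundary overlaps these specific curves can register a nontrivial change in projection; this bounds the number of subsurfaces contributing per step by a constant'' does not hold: infinitely many subsurfaces $Y$ overlap the curve altered by a single elementary move, and for each of them $\pi_Y$ may genuinely move (by a bounded but nonzero amount). The coarse Lipschitz property only gives $d_Y(\nu_i,\nu_{i+1})\le C$ for every $Y$ simultaneously; it does not bound, per step, the number of domains whose projections advance. What is actually needed is that each index $i$ lies in the ``active interval'' of only boundedly many domains $Y$ with $d_Y(\mu,\mu')\ge A$, and establishing this is the hard combinatorial content of this direction --- it requires the consistency/Behrstock inequality and the time-order on overlapping large domains (or, as in \cite{MasurMinsky:complex2}, one bypasses the elementary-move argument entirely and derives this direction too from the efficiency of hierarchy resolutions, $\sum_Y\truncate{d_Y}{A}\asymp |H| \asymp d_{\MM(S)}$). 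As written, your choice of a large threshold $A_0$ does not substitute for this step, so the upper bound on the sum is not established.
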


As a corollary of this, we note:
\begin{corollary}{C bounds M}
For any $r$ there exists $t$ such that for any $\mu,\nu\in\MM(S)$, if
$d_W(\mu,\nu) \le r$ for all $W\subseteq S$, then 
$d_{\MM(S)}(\mu,\nu) \le t$. 
\end{corollary}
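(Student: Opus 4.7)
The plan is to apply Theorem~\ref{distance formula} directly, choosing the threshold $A$ to exceed the bound $r$ so that every term in the sum over subsurfaces vanishes.

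More precisely, given $r$, I would set $A = \max(A_0, r+1)$, so that $A \ge A_0$ and the Quasidistance Formula applies with this choice of threshold. By hypothesis, $d_W(\mu,\nu) \le r < A$ for every essential subsurface $W \subseteq S$. By the definition of the threshold function $\truncate{x}{A}$, this forces
$$
\truncate{d_W(\mu,\nu)}{A} = 0
$$
for every $W$, and hence
$$
\sum_{Y \subseteq S} \truncate{d_Y(\mu,\nu)}{A} = 0.
$$

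Applying Theorem~\ref{distance formula} with this threshold, there exist constants $a, b$ (depending only on $A$ and the topological type of $S$, hence only on $r$ and $S$) such that
$$
d_{\MM(S)}(\mu,\nu) \le a \cdot 0 + b = b.
$$
Setting $t = b$ yields the desired uniform bound.

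There is essentially no obstacle: the corollary is a direct unpacking of the fact that in the quasidistance formula the threshold may be chosen arbitrarily large (above $A_0$), absorbing any uniform bound on subsurface projections into the additive error of the approximation. The only thing to verify is that $A$ may indeed be taken as large as we wish, which is exactly the content of the statement ``for each $A \ge A_0$'' in Theorem~\ref{distance formula}.
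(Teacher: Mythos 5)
Your proof is correct and is exactly the intended argument: the paper states this as an immediate corollary of the Quasidistance Formula without writing out details, and raising the threshold above $r$ so that every term vanishes, leaving only the additive constant of approximation, is precisely how it follows. No issues.
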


\subsubsection*{Product regions}

If $W = W_1\union\cdots\union W_k$ is a disconnected surface with components $W_i$
we define $\MM(W)$ to be
$$
\MM(W) = \MM(W_1)\times\cdots\times\MM(W_k)
$$
metrized with the $\ell^1$ sum of the metrics on the factors. 
As a matter of convention we allow $\MM(W)$ to refer to a disconnected
surface, but only consider $\CC(W)$  when $W$ is connected. Such
products occur when considering certain regions in $\MM(S)$:

If $\Delta$ is a curve system in $S$, let $\QQ(\Delta)$ denote the set
of markings containing $\Delta$ in their base. This set admits a natural product
structure, described in \cite{BehrstockMinsky:rankconj} and
\cite{BKMM}: Let $\sigma(\Delta)$ denote the set of components of
$S\setminus \Delta$ that are not 3-holed spheres, together with the
annuli whose cores are components of $\Delta$. 
The following lemma is a consequence of the quasidistance formula: 
\begin{lemma}{product regions}
Given a curve system $\Delta$, there is a quasi-isometry
$$
\QQ(\Delta) \to \prod_{W\in\sigma(\Delta)} \MM(W)
$$
with constants depending only on the topological type of $S$, 
which is given by the product of projection maps, 
$\prod_{W\in\sigma(\Delta)} \pi_{\MM(W)}$.
\end{lemma}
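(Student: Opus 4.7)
My plan is to use the quasidistance formula (Theorem \ref{distance formula}) on $\MM(S)$ and on each $\MM(W)$, and compare the two sums. First I would check that $\Phi := \prod_{W\in\sigma(\Delta)} \pi_{\MM(W)}$ is coarsely well-defined and coarsely Lipschitz; both follow from the coarse-Lipschitz property of each individual $\pi_{\MM(W)}$ stated after the projection diagram, together with the fact that the $\ell^1$ sum metric on the product only adds finitely many factors.

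Next, for the quasi-isometric embedding, fix $\mu,\mu'\in\QQ(\Delta)$ and classify each essential subsurface $Y\subseteq S$ into three types: (i) $Y\subseteq W$ for some non-annular $W\in\sigma(\Delta)$; (ii) $Y$ is an annulus whose core is a component of $\Delta$ (hence $Y\in\sigma(\Delta)$); (iii) $Y\pitchfork\Delta$, i.e.\ $Y$ overlaps some curve of $\Delta$. A subsurface disjoint from $\Delta$ but not of types (i) or (ii) would have to lie in a component that is either a 3-holed sphere (impossible since $\xi(Y)\ge 0$) or an annulus already counted in (ii), so these three classes are exhaustive. For type (iii), since $\Delta$ is contained in the base of both $\mu$ and $\mu'$, and $\pi_Y$ is coarse-Lipschitz on curves intersecting $Y$, the projections $\pi_Y(\mu)$ and $\pi_Y(\mu')$ are both within bounded distance of $\pi_Y(\gamma)$ for any $\gamma\in\Delta$ overlapping $Y$; hence $d_Y(\mu,\mu')$ is uniformly bounded, and by choosing the threshold $A$ larger than this bound these terms drop out of the sum. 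For types (i) and (ii), $\pi_Y$ factors coarsely through $\pi_{\MM(W)}$ for the unique $W\in\sigma(\Delta)$ containing $Y$, so $d_Y(\mu,\mu')\approx d_Y(\pi_{\MM(W)}(\mu), \pi_{\MM(W)}(\mu'))$. Regrouping the sum by $W\in\sigma(\Delta)$ and applying the distance formula on each $\MM(W)$ yields
$$
d_{\MM(S)}(\mu,\mu') \approx \sum_{W\in\sigma(\Delta)} d_{\MM(W)}\bigl(\pi_{\MM(W)}(\mu),\pi_{\MM(W)}(\mu')\bigr),
$$
which is exactly the $\ell^1$ distance between $\Phi(\mu)$ and $\Phi(\mu')$.

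For coarse surjectivity, given a tuple $(\nu_W)_{W\in\sigma(\Delta)}$, I would build $\mu\in\QQ(\Delta)$ as follows: on each non-annular $W\in\sigma(\Delta)$ take the base and transversals of $\nu_W$; union these with the curves of $\Delta$ to obtain a pants decomposition of $S$ refining $\Delta$; for each annular $W\in\sigma(\Delta)$ (with core $\gamma\in\Delta$) use $\nu_W\in\Z$ to pick a transversal to $\gamma$ with the correct twisting; and finish by choosing transversals to the newly added pants curves in an arbitrary but uniformly bounded way. By construction $\Delta$ lies in the base of $\mu$, so $\mu\in\QQ(\Delta)$, and $\Phi(\mu)$ is uniformly close to $(\nu_W)$.

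The step I expect to be the main obstacle is the careful verification in type (iii): namely that $Y\pitchfork\Delta$ forces $d_Y(\mu,\mu')$ to be bounded independently of $\mu,\mu'\in\QQ(\Delta)$. This relies on the fact that $\pi_Y$ is coarse-Lipschitz on the subset of curves meeting $Y$ together with the observation that both $\mu$ and $\mu'$ contain $\gamma$ in their base, so their $\pi_Y$-images are within a uniform constant of the well-defined (up to bounded ambiguity) point $\pi_Y(\gamma)$. Subject to this standard but delicate verification, the lemma is a direct consequence of the quasidistance formula.
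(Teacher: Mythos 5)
Your proposal is correct and follows exactly the route the paper intends: the paper offers no proof beyond the remark that the lemma ``is a consequence of the quasidistance formula'' (deferring details to \cite{BKMM} and \cite{BehrstockMinsky:rankconj}), and your trichotomy of subsurfaces --- those nested in a component of $\sigma(\Delta)$, annuli with core in $\Delta$, and those overlapping $\Delta$ (whose terms are uniformly bounded because $\Delta$ lies in both bases) --- together with the coarse surjectivity construction is the standard way to carry that out. The only quibble is your parenthetical dismissal of subsurfaces of a $3$-holed sphere via ``$\xi(Y)\ge 0$''; the right reason is that such a $Y$ is either the $3$-holed sphere itself, which has empty curve complex and contributes no term, or a peripheral annulus already accounted for in your type (ii).
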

As a special case, 
if $U\subset S$ is a (possibly
disconnected) surface, let $U^c$ be the surface consisting of all
components of $\sigma(\boundary U)$ which are not components of $U$.
Note that $\xi'(U)+\xi'(U^c)=\xi'(S)=\xi(S)$. Lemma \ref{product
  regions} gives a quasi-isometry
$$
\QQ(\boundary U) \to \MM(U) \times \MM(U^c).
$$

\subsubsection*{Projection bounds}
The projections $\pi_W$ satisfy a number of useful inequalities. 
One,  from \cite{Behrstock:asymptotic}, is: 
\begin{lemma}{behrstock inequality}
There exists a universal constant $m_0$ such that
for any marking $\mu\in\MM(S)$ and subsurfaces $V\pitchfork W$, 
$$
\min \left(  d_W(\mu,\boundary V) , d_V(\mu,\boundary W) \right) < m_0.
$$
\end{lemma}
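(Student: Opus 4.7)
The plan is a surgery and arc-intersection argument, reducing to a single simple closed curve. Since $\mu$ is a complete marking, both $\pi_V(\mu)$ and $\pi_W(\mu)$ are determined up to a universal additive error by the projection of a single base curve of $\mu$; and since any two base curves of $\mu$ are disjoint, their projections to $\CC(V)$ and $\CC(W)$ are within a uniform bound of each other. Hence it suffices to prove the inequality for a curve $\alpha$ in $\base(\mu)$ that meets both $V$ and $W$ essentially (the degenerate case where $\alpha$ misses one of them is handled separately using the transversals).

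Next, I would exploit the surgery definition of the projections: $\pi_V(\alpha)$ is built by closing up an arc of $\alpha\cap V$ with arcs of $\partial V$, and $\pi_V(\partial W)$ is built from an arc of $\partial W\cap V$. If some arc of $\alpha\cap V$ can be chosen disjoint from the arc system $\partial W\cap V$ in $V$, then the resulting surgery curves in $\CC(V)$ are disjoint (or coincide), giving $d_V(\alpha,\partial W)\le 2$. The analogous observation holds in $\CC(W)$.

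To find such an arc, partition $S$ into the four chambers $A=V\cap W$, $B=V\ssm W$, $C=W\ssm V$, $D=S\ssm(V\union W)$. An arc of $\alpha\cap V$ has interior in $A\cup B$ and endpoints on $\partial V$. If some such arc lies entirely in $B$, or entirely in $A$, then it is disjoint from $\partial W$ in $V$, giving $d_V(\alpha,\partial W)\le 2$; symmetrically for arcs of $\alpha\cap W$ lying in $C$ or in $A$. In the remaining \emph{entangled} case, every arc of $\alpha\cap V$ crosses $\partial W$ and every arc of $\alpha\cap W$ crosses $\partial V$. Here the key observation is that any arc of $\alpha\cap V$ with at least two $\partial W$-crossings contains a sub-arc in $A$ with both endpoints on $\partial W$; this sub-arc is maximal as a component of $\alpha\cap W$ (because $\alpha$ exits $W$ into $B$ at each of its endpoints), so it is itself an arc of $\alpha\cap W$ contained entirely in $A$, contradicting entanglement. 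Thus every arc of $\alpha$ in $V$ and in $W$ has exactly one internal boundary-crossing.

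The main obstacle is the remaining tightly constrained entangled configuration. In this last case, the cyclic sequence of chambers visited by the simple closed curve $\alpha$ is restricted enough (each $V$-arc is of chamber-type $AB$ or $BA$, each $W$-arc of type $AC$ or $CA$) that one can bound the geometric intersection number of $\pi_V(\alpha)$ with $\pi_V(\partial W)$ or of $\pi_W(\alpha)$ with $\pi_W(\partial V)$ by a constant depending only on the topology of $S$, using the simple-closedness of $\alpha$ to rule out pathological patterns. The standard bound relating intersection number to distance in the curve complex then yields the uniform bound $m_0$. The preceding reductions are routine; the chamber-cyclic analysis of the entangled case is the essential technical step.
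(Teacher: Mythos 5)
The paper does not prove this lemma; it is quoted from \cite{Behrstock:asymptotic}, where the original argument is different in kind (and gives an ineffective constant). What you are attempting is the elementary combinatorial proof, and your strategy is sound, but as written it stops exactly where you say the essential step begins. Two concrete problems. First, the claim that an arc of $\alpha\cap V$ with at least two $\partial W$--crossings contains a sub-arc of $A$ with both endpoints on $\partial W$ fails for exactly two crossings: the unique interior segment may lie in $B=V\ssm W$, with the two outer segments in $A$ running from $\partial V$ to $\partial W$. Your argument therefore yields only that each arc of $\alpha\cap V$ meets $\partial W$ \emph{at most twice} (three or more crossings do force an interior segment in $A$, which is then a full component of $\alpha\cap W$ contained in $A$), not exactly once. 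Second, and more seriously, the last paragraph is an assertion rather than an argument: ``the cyclic sequence of chambers \dots is restricted enough that one can bound the geometric intersection number'' is precisely what needs proving, and no mechanism is supplied. The irony is that no further chamber-cyclic analysis is needed: once a component $\gamma$ of $\alpha\cap V$ meets $\partial W$ at most twice, it meets each essential arc of $\partial W\cap V$ at most twice, and a uniform bound on the intersection number of two essential arcs in $V$ uniformly bounds the $\CC(V)$--distance between their surgeries (the standard estimate of the form $d\le 2i+1$, via the arc-and-curve complex). So in the entangled case $d_V(\alpha,\partial W)$ is bounded outright and the lemma follows; you should close the argument this way rather than defer to an unspecified analysis.

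Two smaller repairs are needed for completeness. The surgery description of $\pi_V$ is wrong when $V$ is an annulus: there the projection is defined via the compactified annular cover, and the relevant estimate is that arcs crossing the annulus and intersecting at most $k$ times have projections at distance at most $k+1$ --- your ``at most two crossings'' bound still feeds into this. And the reduction to a single base curve meeting both $V$ and $W$ must include the case where no such base curve exists; but then a base curve meeting $V$ essentially is disjoint from $\partial W$, and $d_V(\mu,\partial W)$ is bounded immediately. With these fixes your proof is correct, and it is essentially the now-standard short proof of Behrstock's inequality (due to Leininger), which is more elementary and more effective than the argument in the cited source.
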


The {\em geodesic projection lemma} \cite{MasurMinsky:complex2} states:
\begin{lemma}{geodesic projection}
Let $Y$ be a connected essential subsurface of $S$ satisfying $\xi(Y)\ne 3$
and let $g$ be a geodesic segment in $\CC(S)$ for which
$Y\pitchfork v$ for every vertex $v$ of $g$.
Then 
$$\diam_Y(g) \le B,$$
where $B$ is a constant depending only on $\xi(S)$.
\end{lemma}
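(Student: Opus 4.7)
The plan is to prove the contrapositive: assume that $\diam_Y(g) > B$ for a sufficiently large constant $B = B(\xi(S))$, and produce a vertex $v$ of $g$ with $v \not\pitchfork Y$, contradicting the hypothesis.

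Write $g = v_0 v_1 \cdots v_n$ with endpoints $a = v_0$ and $b = v_n$. Under the hypothesis that each $v_i$ overlaps $Y$, the projections $\pi_Y(v_i) \in \CC(Y)$ are all defined, and the restricted coarse-Lipschitz property recalled in the excerpt gives $d_Y(v_i, v_{i+1}) \le 3$. This yields only the trivial bound $\diam_Y(g) \le 3n$, so the task is to upgrade to a bound independent of $n$. My strategy is to combine $\delta$-hyperbolicity of $\CC(S)$ with the Behrstock inequality \ref{behrstock inequality} via the following intermediate claim: whenever $d_Y(a,b)$ is much larger than the constant $m_0$, there exists a path in $\CC(S)$ from $a$ to $b$, of length at most $d_{\CC(S)}(a,b) + O(1)$, that passes through a component of $\partial Y$. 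Given this claim, hyperbolicity produces a vertex $v_i$ of $g$ with $d_{\CC(S)}(v_i, \partial Y) \le \delta$. I would then choose $B$ large enough that the triangle inequality in $\CC(Y)$ forces one of $d_Y(a, v_i)$, $d_Y(v_i, b)$ to remain much larger than $m_0$; applying \ref{behrstock inequality} to $Y$ against a subsurface filled by a neighborhood of $v_i$ (or inspecting the $\delta$-ball around $\partial Y$ directly) forces $v_i$ to admit a representative in the complement of $Y$, i.e.\ $v_i \not\pitchfork Y$, the desired contradiction.

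The main obstacle is the intermediate claim about the existence of a near-geodesic through $\partial Y$: raw hyperbolicity of $\CC(S)$ is not enough, because converting the projection bound $d_Y(a,b) \ge B$ into a length bound inside $\CC(S)$ itself requires the hierarchy technology of \cite{MasurMinsky:complex2}. In a hierarchy between $a$ and $b$, the domain $Y$ (or a domain having $\partial Y$ among its boundary components) must appear whenever $d_Y(a,b)$ is sufficiently large, and the hierarchy resolution then deposits $\partial Y$ as a vertex on a path that is an approximate main geodesic in $\CC(S)$. This is the technical core of the bounded geodesic image theorem; once the near-geodesic through $\partial Y$ exists, the remainder of the argument is a routine fellow-traveling consequence of $\delta$-hyperbolicity. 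The excluded complexity case $\xi(Y) = 3$ should be handled separately (or absorbed into the constant $B$) by the usual low-complexity analysis of $\CC(Y)$.
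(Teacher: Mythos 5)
This lemma is not proved in the paper at all: it is recalled verbatim as background and attributed to \cite{MasurMinsky:complex2}, where it is the Bounded Geodesic Image Theorem (Theorem~3.1 of that paper). So there is no in-paper argument to compare against, and your proposal has to stand on its own. It does not, for two reasons.

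First, your ``intermediate claim'' --- that $d_Y(a,b)\gg m_0$ forces the existence of a path from $a$ to $b$ of length $d_{\CC(S)}(a,b)+O(1)$ passing through $\partial Y$ --- carries the entire content of the theorem, and the tool you propose for it is circular. In \cite{MasurMinsky:complex2} the hierarchy machinery is built \emph{on top of} the Bounded Geodesic Image Theorem: the statement that a subsurface $Y$ with $d_Y(x,y)$ large must appear as a domain of the hierarchy (the Large Link Lemma, quoted in this paper as part (5) of Lemma~\ref{hierarchy facts}), and the fact that resolutions of hierarchies track the main geodesic, are both proved using Theorem~3.1. You cannot invoke hierarchies to prove the lemma that makes hierarchies work. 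The actual proof in \cite{MasurMinsky:complex2} is a direct argument in the hyperbolic space $\CC(S)$ (this is precisely why the hyperbolicity theorem of \cite{MasurMinsky:complex1} is needed as input), and there are by now elementary surgery-based proofs that avoid hyperbolicity altogether; neither route passes through hierarchies.

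Second, even granting the intermediate claim, your endgame fails. Hyperbolicity would give a vertex $v_i$ of $g$ with $d_{\CC(S)}(v_i,\partial Y)\le \delta$, but coarse proximity to $\partial Y$ in $\CC(S)$ says nothing about disjointness from $Y$: only $d_{\CC(S)}(v_i,\partial Y)\le 1$, i.e.\ actual disjointness of curves, could possibly yield $v_i\not\pitchfork Y$, and even a curve disjoint from $\partial Y$ may be contained in $Y$ and hence still overlap it. The appeal to Lemma~\ref{behrstock inequality} ``against a subsurface filled by a neighborhood of $v_i$'' is not a meaningful step --- that inequality compares the projections of a single marking to two overlapping subsurfaces and cannot convert $\delta$-proximity into disjointness. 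Finally, the exclusion $\xi(Y)\ne 3$ is inherited from the complexity convention of \cite{MasurMinsky:complex2} (where it rules out surfaces with trivial curve complex, such as the thrice-punctured sphere); it is not a low-complexity case to be ``handled separately by the usual analysis of $\CC(Y)$.''
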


The following generalization of the geodesic projection lemma is proven in
\cite[Lemma 5.5]{BKMM}:
\begin{lemma}{gen geodesic projection}
    Let $V, W\subseteq S$ be essential subsurfaces such that 
    $W \pitchfork \boundary V$. Let $g$ be a geodesic in $\CC(W)$. If
    $$d_W(g,\boundary V) > m_1$$ 
    then 
    $$\diam_V(g) \le m_2.$$
    Where the constants $m_1,m_2$ depend only on $S$. 
\end{lemma}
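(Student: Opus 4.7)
The plan is to leverage the hypothesis $W\pitchfork\partial V$ and reduce to Lemma~\ref{behrstock inequality} and Lemma~\ref{geodesic projection}, according to the topological configuration of $V$ and $W$. First I would observe that the hypothesis permits only two arrangements: either $V\subseteq W$ (up to isotopy) or $V\pitchfork W$. The other possibilities, namely $W\subseteq V$ or $V$ isotopically disjoint from $W$, would place $\partial V$ entirely outside $W$, contradicting $W\pitchfork\partial V$.

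Next, choosing $m_1$ larger than a constant to be determined (say $m_1\ge m_0+3$), I would establish that every vertex $v$ of $g$ overlaps $V$. Suppose not. Since $v$ is essential in $W$ but does not overlap $V$, either $v$ is isotopic to a component of $\partial V$ that is essential in $W$, or $v$ is isotopically disjoint from $V$. In the first case $v$ itself appears as a vertex of $\pi_W(\partial V)$, so $d_W(v,\partial V)=0$. In the second case, fix a component $\beta$ of $\partial V$ with $\beta\pitchfork W$; since $v$ is disjoint from $V$ it is disjoint from $\beta$, so the surgered curve $\pi_W(\beta)$ is disjoint from $v$ and $d_W(v,\partial V)\le 1$. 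Both conclusions contradict $d_W(v,\partial V)>m_1$, so $\pi_V(v)$ is in fact defined on every vertex $v$ of $g$.

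If $V\subseteq W$, I would apply Lemma~\ref{geodesic projection} directly, with $W$ playing the role of the ambient surface and $V$ in place of $Y$, using the overlap property just established, to conclude $\diam_V(g)\le B$. If instead $V\pitchfork W$, I would apply Lemma~\ref{behrstock inequality} at each vertex $v$ of $g$ (passing to any complete marking containing $v$ in its base, so that its projections to $\CC(V)$ and $\CC(W)$ agree with those of $v$ up to a uniform additive constant, which can be absorbed into $m_0$). This yields $\min\bigl(d_V(v,\partial W),\,d_W(v,\partial V)\bigr)<m_0$, and since $d_W(v,\partial V)>m_1\ge m_0$ the minimum must be the first term: $d_V(v,\partial W)<m_0$. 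Hence $\pi_V(g)$ lies in the $m_0$-neighborhood of the finite set $\pi_V(\partial W)$, yielding $\diam_V(g)\le 2m_0+\diam_V(\partial W)$. Setting $m_2$ equal to the maximum of these two bounds completes the argument.

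The main obstacle I anticipate is the careful treatment of the vertex-overlap claim, particularly when $V$ or $W$ is an annulus and the arc-based definition of $\CC$ requires separate bookkeeping in the annular cover; however, the same principle (a surgered boundary component is close to $v$ in $\CC(W)$) continues to apply once the correct notion of projection is used. A secondary technical point is ensuring that the $\xi(Y)\ne 3$ hypothesis of Lemma~\ref{geodesic projection} either transfers to the subsurface $V\subseteq W$ or can be circumvented by invoking a complexity-independent version via Lemma~\ref{behrstock inequality} as a fallback.
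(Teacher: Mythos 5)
The paper does not actually prove this lemma: it is quoted as Lemma~5.5 of \cite{BKMM}, so there is no in-paper argument to measure yours against. Judged on its own, your derivation is correct in substance and is the standard way to obtain this statement from the two background facts: the hypothesis $d_W(g,\boundary V)>m_1$ forces every vertex of $g$ to overlap $V$ (a vertex that failed to overlap $V$ would be disjoint from some component of $\boundary V$ that meets $W$ essentially, hence within distance $1$ of $\pi_W(\boundary V)$); in the nested case $V\subsetneq W$ one then applies Lemma~\ref{geodesic projection} with $W$ as the ambient surface, and in the transverse case $V\pitchfork W$ one applies Lemma~\ref{behrstock inequality}, with the largeness of $d_W(v,\boundary V)$ forcing the minimum onto $d_V(v,\boundary W)$, so that $\pi_V(g)$ is trapped in a bounded neighborhood of the bounded-diameter set $\pi_V(\boundary W)$. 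Your device of passing from the curve $v$ to a marking with $v$ in its base is a legitimate bridge to the marking-based statement of Lemma~\ref{behrstock inequality}, and costs only a uniform additive constant precisely because you have already shown $v\pitchfork V$. The one genuine soft spot is the opening dichotomy: $W\pitchfork\boundary V$ also admits the degenerate configuration in which $W$ is an annulus whose core is a component of $\boundary V$, where neither $V\subseteq W$ nor $V\pitchfork W$ holds; in that configuration, however, $\pi_V$ is not meaningfully defined on vertices of $\CC(W)$ and the statement is read as vacuous, which is consistent with the caveat about annular bookkeeping you already raise at the end.
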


\subsubsection*{Partial orders}
The inequalities of Lemmas \ref{behrstock inequality} and 
\ref{geodesic projection} can be interpreted as describing a family of
partial orders for connected subsurfaces that are ``between'' pairs of markings
in $\MM(S)$. Given $x,y\in\MM(S)$, and a constant $c>0$, define for a
natural number $k$
$$
\FF_k(x,y) = \{U\subsetneq S: d_U(x,y)> kc\}.
$$
Note that this is (for appropriate threshold) the set of proper
(connected) subsurfaces
participating in the quasidistance formula for $d(x,y)$; in particular
it is finite. 
Define also a family of relations $\prec_k$ on proper connected subsurfaces of $S$, by
saying that $V\prec_k W$ if and only if $V\pitchfork W$, and 
$$
d_V(x,\boundary W) > kc.
$$
Note that $\prec_k$ depends on $x$, not $y$, so we assume throughout an
{\em ordered} pair $(x,y)$. 
In \cite{BKMM} we show that
\begin{lemma}{partial order}
There exists $c_0$ such that, if $c>c_0$ in the above definitions,
then for $k>2$ the relation $\prec_{k-1}$ is a partial order on
$\FF_k(x,y)$ for any $x, y\in\MM(S)$. Moreover if $V,W\in\FF_k(x,y)$
and $V\pitchfork W$ then $V$ and $W$ are $\prec_{k-1}$--ordered. 

\end{lemma}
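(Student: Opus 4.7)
My plan is to verify irreflexivity, antisymmetry, transitivity, and the comparability ``moreover'' clause in turn, choosing $c_0$ large enough to absorb the universal constants $m_0, m_1, m_2$ from Lemmas~\ref{behrstock inequality} and~\ref{gen geodesic projection}. Irreflexivity is automatic since $V \not\pitchfork V$, so $V \not\prec_{k-1} V$.

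Antisymmetry follows directly from Behrstock: if $V \prec_{k-1} W$ and $W \prec_{k-1} V$ held simultaneously, then $d_V(x,\partial W)$ and $d_W(x,\partial V)$ would both exceed $(k-1)c > m_0$, contradicting Lemma~\ref{behrstock inequality} applied at the marking $x$. The comparability clause uses the same inequality at $y$: suppose $V,W \in \FF_k(x,y)$, $V \pitchfork W$, but neither $V \prec_{k-1} W$ nor $W \prec_{k-1} V$ holds. Then $d_V(x,\partial W), d_W(x,\partial V) \le (k-1)c$, while $d_V(x,y), d_W(x,y) > kc$ since $V,W \in \FF_k(x,y)$. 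The triangle inequality in the two curve complexes yields $d_V(y,\partial W), d_W(y,\partial V) > c > m_0$, contradicting Behrstock at $y$.

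Transitivity is the main step. Assume $V \prec_{k-1} W$ and $W \prec_{k-1} U$. Since $d_V(x,\partial W) > (k-1)c > m_0$, Lemma~\ref{behrstock inequality} at $x$ applied to $(V,W)$ forces $d_W(x,\partial V) < m_0$, whence $d_W(\partial V, \partial U) > (k-1)c - m_0$ by the triangle inequality in $\CC(W)$. Now feed a geodesic $g$ in $\CC(W)$ from $\pi_W(\partial V)$ to $\pi_W(\partial U)$ into Lemma~\ref{gen geodesic projection}: after discarding the initial $m_1$-ball around $\pi_W(\partial V)$, the truncated geodesic $g'$ still contains $\pi_W(\partial U)$ and has $\CC(V)$-diameter at most $m_2$. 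The first vertex $v_0$ of $g'$ is a curve in $W$, hence disjoint from $\partial W$ in $S$, so $d_V(v_0, \partial W) \le 1$. Therefore $d_V(\partial W, \partial U) \le d_V(\partial W, v_0) + \diam_V(g') \le m_2 + 1$, and the triangle inequality in $\CC(V)$ gives $d_V(x,\partial U) \ge d_V(x,\partial W) - (m_2+1) > (k-1)c - C$ for a universal $C$. For $V \pitchfork U$, a case analysis rules out each non-overlapping possibility ($V \subseteq U$, $U \subseteq V$, or $V$ and $U$ disjointly nested): in each case, one of the projections $\pi_V(\partial U)$ or $\pi_U(\partial V)$ either becomes undefined or coincides with a previously-bounded projection, contradicting the large distance bounds already established.

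The hard part will be the additive loss $C$ appearing in the transitivity step, which must be absorbed so that the output threshold is the clean $(k-1)c$ in the statement. This is handled by calibrating $c_0$ at the outset against $m_0, m_1, m_2$, exploiting the buffer between the threshold $kc$ defining $\FF_k(x,y)$ and the threshold $(k-1)c$ defining $\prec_{k-1}$; the comparability and transitivity arguments both rely critically on this gap, which is why the lemma separates the domain $\FF_k$ from the relation index $k-1$.
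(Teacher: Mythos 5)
The paper does not actually prove this lemma --- it is quoted from \cite{BKMM}, with the remark that there it is ``established using just the projection inequalities, as an extended exercise in the triangle inequality.'' Your proposal is a from-scratch proof in exactly that spirit, and its skeleton is right: irreflexivity from $V\not\pitchfork V$; asymmetry from Lemma \ref{behrstock inequality} at $x$; comparability from Lemma \ref{behrstock inequality} at $y$ together with $d_V(x,y),d_W(x,y)>kc$; and transitivity by pushing $\partial V$ and $\partial U$ far apart in $\CC(W)$ and then invoking Lemma \ref{gen geodesic projection}. (Two small points there: the endpoint of $g'$ is $\pi_W(\partial U)$, not $\partial U$, so you also need the standard coarse compatibility $d_V(\pi_W(\partial U),\partial U)=O(1)$; and the cleanest way to get $V\pitchfork U$ is to note that if $U,V$ were disjoint or nested then $\partial U\cup\partial V$ would be realizable as a disjoint curve system, forcing $d_W(\partial U,\partial V)=O(1)$ and contradicting $d_W(\partial U,\partial V)>(k-1)c-m_0$.)

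The genuine gap is the last step of transitivity. You arrive at $d_V(x,\partial U)>(k-1)c-C$ and propose to recover the clean threshold $(k-1)c$ by ``calibrating $c_0$,'' but no choice of $c_0$ turns an additive deficit of $C$ into a surplus against the \emph{same} threshold $(k-1)c$; the buffer between $kc$ and $(k-1)c$ does not enter your transitivity computation at all (indeed $y$ and the hypothesis $U\in\FF_k(x,y)$ are never used). The missing ingredient is the gap/dichotomy that the paper records as Lemma \ref{four inequalities}: for overlapping $U,V\in\FF_k(x,y)$, applying Lemma \ref{behrstock inequality} at $x$ \emph{and} at $y$ together with $d_U(x,y)>kc$ shows that $d_V(x,\partial U)$ is either $<m_0$ or $>kc-m_0-O(1)$, with nothing in between. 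Once $c_0$ is large enough that $(k-1)c-C\ge m_0$ (using $k-1\ge 2$) and $kc-m_0-O(1)>(k-1)c$, your bound $d_V(x,\partial U)>(k-1)c-C$ rules out the lower branch, so the upper branch gives $d_V(x,\partial U)>(k-1)c$ and hence $V\prec_{k-1}U$. With that dichotomy inserted, the argument closes.
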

Moreover, it will be useful to see that the relation $\prec_{k-1}$ can
be characterized in a few ways: 
\begin{lemma}{four inequalities}
Let $V,W\in \FF_k(x,y)$, $W\pitchfork V$, and take $c$ to be any 
sufficently large real number. 
The following are equivalent:
\begin{enumerate}
    \item $W\prec_{k-1}V$
\item $d_W(x,\boundary V) > (k-1)c$,
\item $d_W(y,\boundary V) \le c$,
\item $d_V(x,\boundary W) \le c$,
\item $d_V(y,\boundary W) > (k-1)c$.
\end{enumerate}
A related fact is that if 
$V\pitchfork \boundary W$, $d_V(x,\boundary W)>(k+1)c$ 
and $W\in\FF_2(x,y)$, then $V\in \FF_{k}(x,y)$.
\end{lemma}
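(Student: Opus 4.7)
The plan is to prove the equivalences cyclically, using only Behrstock's inequality (Lemma \ref{behrstock inequality}) and the triangle inequality inside the curve complexes $\CC(V)$ and $\CC(W)$. The equivalence (1) $\iff$ (2) is immediate from the definition of $\prec_{k-1}$, so it remains to show (2) $\Rightarrow$ (4) $\Rightarrow$ (5) $\Rightarrow$ (3) $\Rightarrow$ (2). Throughout we fix $c > \max(c_0, m_0)$, which is what ``sufficiently large'' will mean.

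For (2) $\Rightarrow$ (4), apply Behrstock's inequality with $\mu = x$ to the overlapping pair $V, W$: $\min(d_W(x,\boundary V), d_V(x,\boundary W)) < m_0 \le c$. Since (2) gives $d_W(x,\boundary V) > (k-1)c \ge c$, we must have $d_V(x,\boundary W) < m_0 \le c$. For (4) $\Rightarrow$ (5), use $V \in \FF_k(x,y)$ so that $d_V(x,y) > kc$; then the triangle inequality in $\CC(V)$ yields
\[
d_V(y,\boundary W) \ge d_V(x,y) - d_V(x,\boundary W) > kc - c = (k-1)c.
\]
For (5) $\Rightarrow$ (3), apply Behrstock's inequality again, this time to $\mu = y$: since (5) gives $d_V(y,\boundary W) > (k-1)c \ge c > m_0$, the other term must be small, so $d_W(y,\boundary V) < m_0 \le c$. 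Finally for (3) $\Rightarrow$ (2), use $W \in \FF_k(x,y)$ and the triangle inequality in $\CC(W)$:
\[
d_W(x,\boundary V) \ge d_W(x,y) - d_W(y,\boundary V) > kc - c = (k-1)c.
\]
This closes the cycle and establishes the equivalence of (1)--(5).

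For the related fact, assume $V\pitchfork\boundary W$, $d_V(x,\boundary W)>(k+1)c$ and $W\in\FF_2(x,y)$. The strategy is to chase two Behrstock inequalities, one at $x$ and one at $y$, to conclude $d_V(y,\boundary W)$ is also tiny, and then apply the triangle inequality in $\CC(V)$ to get $d_V(x,y)$ large. Concretely: Behrstock at $x$ on the pair $V,W$ (which overlap since $V\pitchfork\boundary W$ and $W$ is a proper subsurface) forces $d_W(x,\boundary V) < m_0$, because the other term exceeds $(k+1)c$. Combined with $d_W(x,y) > 2c$ and the triangle inequality, we deduce $d_W(y,\boundary V) > 2c - m_0 > m_0$. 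Applying Behrstock at $y$ now gives $d_V(y,\boundary W) < m_0$. A final triangle inequality in $\CC(V)$ gives
\[
d_V(x,y) \ge d_V(x,\boundary W) - d_V(y,\boundary W) > (k+1)c - m_0 > kc,
\]
so $V \in \FF_k(x,y)$ as claimed.

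I do not anticipate a serious obstacle: the proof is essentially a bookkeeping exercise in Behrstock's inequality, with the only care needed being to check that $c$ is chosen large enough (any $c > \max(c_0, m_0)$ will do) so that the constants $m_0$ from Behrstock always lose out against multiples of $c$. The slight subtlety is to make sure the cyclic implication passes correctly through the constants $(k-1)c$ and $kc$ at each step, but this is handled uniformly by the two inputs (triangle inequality combined with the hypothesis $V, W \in \FF_k(x,y)$) alternating with two applications of Lemma \ref{behrstock inequality} at $x$ and at $y$.
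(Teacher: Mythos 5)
Your cyclic argument for the equivalence of (1)--(5) is correct and is exactly the intended one: the paper does not prove this lemma but defers to \cite{BKMM}, describing it as ``an extended exercise in the triangle inequality'' using the projection inequalities, and your alternation of Behrstock's inequality (at $x$ and at $y$) with the triangle inequality (using $V\in\FF_k$ and $W\in\FF_k$ respectively) is that exercise. The only bookkeeping caveat is that $\pi_W(\mu)$ and $\pi_W(\boundary V)$ are coarse points of bounded diameter, so each triangle inequality carries a uniform additive error; your chain $kc-c=(k-1)c$ is therefore really $kc-c-O(1)$, and one either absorbs this into the convention for $d_W$ or into the (slightly adjusted) thresholds, as is done in \cite{BKMM}. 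That is cosmetic.

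There is, however, a genuine gap in your proof of the ``related fact.'' You assert that $V$ and $W$ overlap ``since $V\pitchfork\boundary W$ and $W$ is a proper subsurface,'' but by the paper's definition $V\pitchfork W$ requires \emph{both} $V\pitchfork\boundary W$ and $W\pitchfork\boundary V$, and the hypothesis gives only the first. The case left out is $W\subsetneq V$ (then $\boundary W$ is essential and non-peripheral in $V$, so $V\pitchfork\boundary W$, while $\boundary V$ misses $W$ entirely): here $d_W(\cdot,\boundary V)$ is undefined and Lemma \ref{behrstock inequality} simply does not apply, so your two-step Behrstock chase breaks at the first step. The fix is to run the argument through Lemma \ref{gen geodesic projection} instead, which needs only the hypothesis $V\pitchfork\boundary W$: take a geodesic $g=[x,y]_V$ in $\CC(V)$; if every vertex of $g$ were farther than $m_1$ from $\boundary W$, then $\diam_W(g)\le m_2$, forcing $d_W(x,y)<2c$ and contradicting $W\in\FF_2(x,y)$; hence some vertex of $g$ lies within $m_1$ of $\pi_V(\boundary W)$, and then
$$
d_V(x,y)=\length(g)\ \ge\ d_V(x,\boundary W)-m_1-O(1)\ >\ (k+1)c-m_1-O(1)\ >\ kc,
$$
so $V\in\FF_k(x,y)$. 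This argument covers the overlapping and the nested case uniformly, so you may as well replace your Behrstock chase for the related fact by it entirely.
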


These partial orders are closely related to the ``time-order''
that appears in \cite{MasurMinsky:complex2}. In \cite{BKMM}
these facts are established using just the projection inequalities, as an
extended exercise in  the triangle inequality.

\subsubsection*{Consistency Theorem}
Consider the combined projection map
$$
\Pi:\MM(S) \to \prod_{W\subseteq S} \CC(W),
$$
$\Pi(\mu) = (\pi_W(\mu))_W$, 
where $W$ varies over essential subsurfaces of $S$ and $\pi_W$ denotes
the subsurface projection map $\MM(S) \to \CC(W)$. We say that an
element $x = (x_W) \in \prod_W \CC(W)$ is {\em $D$--close to the image
  of $\Pi$} if there exists $\mu\in\MM(S)$ such that
$d_W(x_W,\mu) < D$ for all $W\subseteq S$. The following 
\emph{Consistency Theorem}, from
\cite{BKMM}, gives a coarse characterization of the image of $\Pi$. 

\begin{theorem}{consistency}{\rm (Consistency Theorem).}
Given $c_1,c_2 > 0$ there exists
$D$ such that any point $(x_W)_W \in
\prod_{W}\CC(W)$ satisfying the following two conditions 
is $D$--close to the image of $\Pi$.
\begin{itemize}
\item[C1:] For any $U\subset V \subseteq S$, if $d_V(\boundary
  U, x_V) \ge c_1$ then 
$$
d_U(x_U,x_V) < c_2.
$$
\item[C2:] For any $U,V\subset S$ with $U \pitchfork V$, 
$$
\min \left(    d_U(x_U,\boundary V) , d_V(x_V,\boundary
U) \right) < c_2.
$$
\end{itemize} 

Conversely given $D$ there exist $c_1,c_2$ so that if $(x_W)$ is
$D$--close to the image of $\Pi$ then it satisfies conditions C1-2. 
\end{theorem}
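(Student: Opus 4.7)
The plan is to treat the two directions separately. The converse (projections of genuine markings satisfy C1--C2) is fairly direct from tools already assembled in the excerpt, while the forward direction is the substantive construction.

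For the converse, suppose $(x_W)$ is $D$--close to $\Pi(\mu)$ for some marking $\mu$. Condition C2 is then essentially the Behrstock inequality (Lemma \ref{behrstock inequality}) applied to $\mu$: since the projection maps are coarse-Lipschitz, each $d_U(x_U,\boundary V)$ differs from $d_U(\mu,\boundary V)$ by a constant depending only on $D$, and the inequality transfers with a slightly weakened threshold. For C1 with $U\subset V$, I would use the bounded geodesic image principle (a special case of Lemma \ref{gen geodesic projection}): if $d_V(\boundary U, x_V)$ is large, then a geodesic in $\CC(V)$ from $\pi_V(\mu)$ to $\boundary U$ stays far from $\boundary U$ along most of its length, forcing its $\CC(U)$--projection to have bounded diameter; combining this with $d_U(x_U,\pi_U(\mu))<D$ yields the desired bound on $d_U(x_U,x_V)$.

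For the forward direction I would argue by induction on the complexity $\xi(S)$. The base case $\xi(S)=1$, in which $\CC(S)$ is the Farey graph, must be handled separately: the data $x_S$ together with the annular data $x_\alpha$ for core curves $\alpha$ already suffice to assemble a marking, with C2 ensuring the pieces are mutually compatible. For the inductive step, the key is to use C1 and C2 to produce a partial order on the set of subsurfaces where $x_W$ is significant. Fix any reference marking $\mu_0$ and consider the finite collection $\GG=\{W\subsetneq S : d_W(x_W,\mu_0)>K\}$ for a sufficiently large threshold $K$. Lemma \ref{partial order} together with C2 implies that when two elements $V,W\in\GG$ overlap, they admit a partial order analogous to $\prec_k$, while C1 governs the behavior on nested pairs. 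Pick a $\prec$--maximal $W\in\GG$, invoke the inductive hypothesis on $W$ (where $\xi(W)<\xi(S)$) to produce a marking $\mu_W\in\MM(W)$ realizing the restricted projection data $(x_V)_{V\subseteq W}$, and then replace the restriction of $\mu_0$ to $W$ by $\mu_W$ using the product structure of Lemma \ref{product regions}. Iterating this procedure on the residual data eventually produces a marking $\mu$ whose projections coarsely match $(x_W)$ on every subsurface.

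The main obstacle is the bookkeeping: one must verify that after peeling off a maximal subsurface $W$ and updating $\mu_0$, the residual data on the other subsurfaces still satisfies C1--C2 with only slightly weakened constants, so the induction can continue, and that cumulative error across all peeling steps remains bounded by the final $D$. An alternative route would be to build a hierarchy of tight geodesics in the sense of Masur--Minsky whose projection data encodes $(x_W)$ and take $\mu$ to be its initial marking; in that approach the crux is showing that the hierarchy can be produced from projection data alone, which again reduces to the partial order analysis outlined above.
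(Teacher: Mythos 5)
First, a remark on scope: the paper does not prove this statement at all — the Consistency Theorem is imported verbatim from \cite{BKMM} and used as a black box — so there is no in-paper argument to compare yours against. Judged on its own terms, your converse direction is essentially correct: C2 is Lemma \ref{behrstock inequality} transported across the $D$--error (the paper itself notes that C2 at $D=0$ \emph{is} that lemma), and C1 follows from Lemma \ref{gen geodesic projection} together with the coarse commutativity $\pi_U\circ\pi_V\approx\pi_U$ on markings. One slip: the relevant geodesic is the one in $\CC(V)$ joining $x_V$ to $\pi_V(\mu)$, which has length at most $D$ and hence stays $m_1$--far from $\boundary U$ once $c_1>D+m_1$; a geodesic \emph{ending at} $\boundary U$, as you wrote, cannot stay far from $\boundary U$, so that sentence as stated is not the argument you want.

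The forward direction has genuine gaps. (1) You assert that $\GG=\{W\subsetneq S: d_W(x_W,\mu_0)>K\}$ is finite, but for an abstract consistent tuple measured against an arbitrary reference marking this finiteness is itself a nontrivial consequence of C1--C2: for a pair of genuine markings it follows from the quasidistance formula, but here one "endpoint" is precisely the object you have not yet shown to be a marking. (2) More seriously, your peeling procedure never uses $x_S$, nor $x_W$ for $W\notin\GG$: after all the splicing, the resulting marking's projections to $\CC(S)$ and to every subsurface outside $\GG$ are those of the arbitrary $\mu_0$, which need not be anywhere near the prescribed coordinates. The induction has to run top-down rather than by perturbing a reference marking: one first places a curve near $x_S$ into the base, recurses into the complementary subsurfaces and the annulus about that curve, and then uses C1 (with $V=S$) and C2 to check that subsurfaces overlapping the chosen base curves still receive approximately the prescribed projections — this last verification is the heart of the proof, not bookkeeping. (3) The splicing step via Lemma \ref{product regions} requires $\boundary W$ to lie in the base of the marking being modified, which an arbitrary $\mu_0$ will not satisfy, and after splicing you must still control projections to subsurfaces that overlap $\boundary W$. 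Your alternative suggestion of building a hierarchy directly from the projection data is closer in spirit to how the result is actually proved, but it faces the same issue: the construction must be seeded by the top-level coordinate $x_S$, which your primary construction discards.
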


Note that the converse direction of the theorem includes Lemma
\ref{behrstock inequality} as condition C2 in the case $D=0$. 

\subsubsection*{$\Sigma$--hulls}
If $x,y\in\MM(S)$ and $W$ is a connected subsurface, let $[x,y]_W$ be
a geodesic in $\CC(S)$ connecting $\pi_W(x)$ to $\pi_W(y)$ (this may
not be unique but we can make an arbitrary choice --- all of them are
$\delta$--close to each other by hyperbolicity). For a finite set
$A\subset \MM(S)$, define $\hull_W(A)$ to be the union of $[a,b]_W$
over $a,b\in A$. For any fixed $\ep>0$, we define a $\Sigma$--hull of 
$A$ to be a set of the following form:
$$
\Sigma_\ep(A) = \{ \mu\in\MM(S): \forall W\subseteq S, \ 
     d_W(\mu,\hull_W(A)) \le \ep\}.
$$
If $A$ is a pair $\{x,y\}$ we also write $\Sigma_\ep(A) =
\Sigma_\ep(x,y)$. In \cite{BKMM} we study these sets, and in particular prove the
following: 
\begin{lemma}{Sigma properties}
Given $\ep$ and $n$ there exists $b$ such that
$$\diam(\Sigma_\ep(A)) \le b(\diam(A) + 1)$$
for any $A\subset \MM(S)$ of cardinality $n$. 

\end{lemma}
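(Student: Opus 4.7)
The plan is to use the Quasidistance Formula (Theorem~\ref{distance formula}) to reduce the bound on $d_{\MM(S)}(\mu,\mu')$ for $\mu,\mu' \in \Sigma_\ep(A)$ to a sum of projection distances, and then estimate each term by comparing $\mu$ and $\mu'$ to the union of geodesics defining $\hull_W(A)$ in each curve complex $\CC(W)$.

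For the per-subsurface estimate, for any essential $W\subseteq S$ the triangle inequality in $\CC(W)$ gives
$$
d_W(\mu,\mu') \le 2\ep + \diam_W(\hull_W(A)).
$$
The set $\hull_W(A)$ is a union of at most $\binom{n}{2}$ geodesics in the hyperbolic space $\CC(W)$, each with endpoints in $\pi_W(A)$; a routine triangle-inequality argument therefore bounds its diameter by $3\,\diam_W(A)$, where $\diam_W(A):=\max_{a,b\in A}d_W(a,b)$. Hence
$$
d_W(\mu,\mu') \le 2\ep + 3\,\diam_W(A).
$$

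I would then apply Theorem~\ref{distance formula} to $d_{\MM(S)}(\mu,\mu')$ with a threshold $A'_0 \ge 3A_0+2\ep$, chosen large enough that every contributing subsurface $W$ automatically has $\diam_W(A)\ge A_0$. Using the elementary inequality $\truncate{\max_i x_i}{A_0}\le \sum_i \truncate{x_i}{A_0}$ and exchanging the order of summation, this yields
$$
\sum_W \truncate{d_W(\mu,\mu')}{A'_0} \;\lesssim\; \sum_{a,b\in A}\sum_W \truncate{d_W(a,b)}{A_0} \;+\; C\cdot N,
$$
where $N$ counts the subsurfaces contributing to the left side and $C$ is a constant absorbing the $2\ep$ terms. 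Applying the Quasidistance Formula a second time, now to each pair $(a,b)\in A$, bounds $\sum_W \truncate{d_W(a,b)}{A_0}$ by a constant multiple of $d_{\MM(S)}(a,b)\le \diam(A)$; it also controls $N$, since each contributing $W$ must appear in the quasidistance sum for some pair $(a,b)$, and the total number of such subsurfaces per pair is at most a constant multiple of $d_{\MM(S)}(a,b)/A_0 \le \diam(A)/A_0$. Summing over the at most $\binom{n}{2}$ pairs yields $d_{\MM(S)}(\mu,\mu') \le b(\diam(A)+1)$ for a constant $b$ depending only on $\ep$, $n$, and the topology of~$S$.

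The main obstacle is purely bookkeeping: selecting the thresholds $A_0$ and $A'_0$ so that every subsurface contributing to the Quasidistance Formula for $d_{\MM(S)}(\mu,\mu')$ is one on which $A$ itself already has $\CC(W)$--diameter at least $A_0$, so that the max-to-sum exchange $\truncate{\max_i x_i}{A_0}\le\sum_i\truncate{x_i}{A_0}$ can be applied without losing the truncation. This is also the step where the cardinality $n$ enters the constant $b$, through the passage from the maximum over pairs in $\diam_W(A)$ to a sum over the $\binom{n}{2}$ pairs.
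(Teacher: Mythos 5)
Your argument is correct: the per-subsurface bound $d_W(\mu,\mu')\le 2\ep+3\,\diam_W(A)$, the threshold choice guaranteeing that every contributing $W$ already satisfies $\diam_W(A)\ge A_0$, the max-to-sum exchange over the $\binom{n}{2}$ pairs, and the counting of contributing subsurfaces to absorb the $2\ep$ terms all go through. Note that the paper itself does not prove this lemma but quotes it from \cite{BKMM}; your distance-formula argument is essentially the standard one used there, so there is nothing to flag.
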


\subsubsection*{Tight geodesics, footprints and hierarchies}
A geodesic in $\CC(S)$ is a sequence of vertices $\{v_i\}$ such that
$d(v_i,v_j) = |j-i|$. In \cite{MasurMinsky:complex2} this is generalized
a bit to sequences of {\em simplices}, i.e., disjoint curve
systems $\{w_i\}$, such that $d(v_i,v_j) = |j-i|$ for any $v_i\in
w_i$, $v_j\in w_j$, and $i\ne j$. For a (generalized) geodesic
$g=\{w_i\}$ in
$\CC(V)$, and any subsurface $U\subset V$, we define the {\em
  footprint} $\phi_g(U)$ to be the set of simplices $w_i$ disjoint
from $U$. By the triangle inequality $\diam_V(\phi_g(U))\le 2$. A
condition called {\em tightness} is formulated in
\cite{MasurMinsky:complex2} which has the following property: for a
tight geodesic, all nonempty footprints are {\em contiguous} intervals
of one, two, or three simplices (leaving out the possibility of two
simplices at distance 2, with their midpoint not included). This is
the basic definition that leads to the notion of a {\em hierarchy of
tight geodesics} between any two $x,y\in\MM(S)$. A hierarchy consists
of a particular collection of tight geodesics $k$, each in $\CC(W)$ 
for a subsurface $W\subseteq S$ known as the support of $k$.
We will only need a few basic facts about hierarchies, to be used in the proof
of Lemma \ref{G diameter sum}.

\begin{lemma}{hierarchy facts}
If $x,y\in\MM(S)$ and $H=H(x,y)$ is a hierarchy of tight geodesics,
then 
\begin{enumerate}
\item $H$ contains a tight geodesic $[x,y]_S$
with support $S$ and endpoints $\pi_S(x)$ and $\pi_S(y)$ (the
selected points in the base of $x$ and $y$ respectively).

\item If $h$ is a tight geodesic in $H$ and $U$ is its support, then
  the endpoints of $h$ are within uniform distance $m_3$ in $\CC(U)$ from
  $\pi_U(x)$ and $\pi_U(y)$. 
\item If $h$ is a tight geodesic in $H$ and $U\subsetneq S$ is its support, 
then there exists $k$ in $H$ with support $W$, and a simplex $w$ in
$\phi_k(U)$, such that $U$ is either a component of $W\setminus w$, or
an annulus whose core is a component of $w$. 
\item A subsurface $W$ can be the support of at most one geodesic in
  $H$, which we denote $h_{x,y,W}$.
\item For a uniform $m_4$, all connected subsurfaces $W$ with
  $d_W(x,y) > m_4$ are domains of geodesics in $H$. 
\end{enumerate}
\end{lemma}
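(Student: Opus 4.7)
The plan is to treat this lemma as a collection of standard structural facts about hierarchies of tight geodesics from \cite{MasurMinsky:complex2}, deriving each item either directly from the inductive construction of $H(x,y)$ or by quoting the appropriate proposition from that paper. Essentially nothing new happens here beyond translating their definitions and lemmas into the notation used in the present excerpt.

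First I would recall the recursive construction of a hierarchy: it begins with a \emph{main geodesic} in $\CC(S)$, a tight geodesic joining chosen base-vertices of $x$ and $y$, which one arranges to be $\pi_S(x)$ and $\pi_S(y)$; this establishes item (1). The remaining geodesics are added inductively by filling gaps in the footprints of already-built geodesics, and the rule for introducing a new geodesic with support $U$ is exactly that some simplex $w$ in the footprint $\phi_k(U)$ of an existing geodesic $k$ (with support $W$) has $U$ either as a component of $W\setminus w$ or as an annulus whose core is a component of $w$. This is item (3), and item (4) follows immediately from the fact that each subsurface arises from a unique such gap, so it is the support of at most one geodesic in $H$.

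For item (2), every geodesic $h$ in $H$ carries initial and terminal markings $\I(h)$ and $\T(h)$ obtained from $x$ and $y$ by restriction to the support $U$ (composed with uniformly bounded modifications coming from the inductive process); the endpoints of $h$ are vertices in the bases of these markings and are therefore within bounded $\CC(U)$--distance of $\pi_U(x),\pi_U(y)$, and $m_3$ is taken to absorb this bounded error. Item (5) is the \emph{Large Link Lemma} of \cite{MasurMinsky:complex2}: for a uniform threshold $m_4$ depending only on $S$, any connected $W\subseteq S$ with $d_W(x,y)>m_4$ must appear as the support of a geodesic in $H$. The only real obstacle is expository --- one must verify that the endpoints, the constants $m_3,m_4$, and the uniformity depend only on the topological type of $S$, which is built into the cited arguments.
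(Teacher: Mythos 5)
Your proposal is correct and takes essentially the same route as the paper: the lemma is stated there as background with no proof, the five items being quoted directly from the hierarchy machinery of \cite{MasurMinsky:complex2} (main geodesic, initial/terminal marking bounds, the component-domain/subordinacy construction, uniqueness of the geodesic supported on a given domain, and the Large Link Lemma). Your attribution of each item to the corresponding piece of that construction is accurate, so nothing further is needed.
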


\subsubsection*{Rapid Decay}
\medskip

Although in the text we do not work directly with the rapid 
decay property, for the benefit of the reader who (like the authors) 
is not an analyst, we briefly discuss the formulation of the 
rapid decay property and related notions. For further details 
see \cite{Valette:BCbook} or \cite{Connes:noncommutative}. 

Given a finitely generated group $G$, we consider its action by
left-translation on $l^2(G)$, the square-summable $\C$-valued
functions on $G$. This action extends by linearity to an action of the group
algebra $\C G$, and indeed $\C G$ is just the subset of $l^2(G)$
consisting of functions with finite support, and the action is nothing
more than convolution, i.e., $f * g (z) = \sum_{x\in G}
f(x)g(x^{-1}z)$. 

This gives us an embedding of $\C G$ into the bounded operators on
$l^2(G)$, indeed  for $f\in \C G$ and $h\in l^2(G)$ we have $||f*h||_2
\le ||f||_1 ||h||_2$ by Young's inequality.
The reduced $C^*$-algebra of $G$, denoted $C^*_r(G)$, is 
the closure of $\C G$ in the operator norm. 

On the other hand, $\C G$ embeds in the normed spaces 
$H^s(G) = \{h: ||h||_{2,s}< \infty\}$, where 
$$||h||_{2,s} = (\sum_{x\in G}
((1+|x|)^sh(x))^2)^{1/2}$$
for $s>0$, and $|\cdot|$ denotes word
length in $G$.  The intersection
$H^\infty(G)=\intersect_s H^s(G)$ is the space of {\em rapidly decreasing
  functions} on $G$. (Note that $H^s$ and $H^\infty$ are invariant, up
to bounded change of norm, under change of generators).

Recall that, in the abelian setting (e.g., $G=\Z^n$), functions of rapid decrease in
$G$  Fourier-transform to smooth functions on the
Pontryagin dual  $\hhat G$ (e.g., $\hhat{\Z^n}=T^n$). In the nonabelian
setting, there is no 
Pontryagin dual so $H^\infty$ acts as a substitute for the algebra of
smooth functions (see Connes-Moscovici
\cite{ConnesMoscovici:hyperbolic}). 

We say that $G$ has the {\em rapid decay property} if the embedding of $\C
G$ into $C^*_r(G)$ extends continuously to an embedding of
$H^\infty(G)$. 

This condition boils down
(see \cite{ChatterjiRuane,DrutuSapir:RD}) to a polynomial convolution norm bound of the following
form: there exists a polynomial $P(s)$ such that, if $f$ is
supported in a ball of radius $s$ in $G$, then
\begin{equation}\label{convolution bound}
||f*g||_2 \le P(s)||f||_2||g||_2.
\end{equation}
Here one can start to see at least the relevance of the centroid
condition and its bound. Indeed, in the sum $f*g(z)=
\sum_{x}f(x)g(x^{-1}z)$,
$x$ can be restricted to the ball of
radius $s$. Now we can rearrange this as a sum over the centroids
$t = \kappa(1,x,z)$, and the number of such $t$ is polynomial in
$s$ by Theorem \ref{centroid}. This observation plays a role in the
proofs of (\ref{convolution bound}) 
in both Dru\c{t}u-Sapir
\cite{DrutuSapir:RD} and Chatterji-Ruane
\cite{ChatterjiRuane}.

\section{Centroids}
\label{centroid construction}

In a $\delta$--hyperbolic metric space $X$, define a {\em
  $\rho$--centroid} of a triple of points $A=\{a_1,a_2,a_3\}$ to be a
point $x$ which is within $\rho$ of each of the geodesics $[a_i,a_j]$ 
(if geodesics are not unique make an arbitrary choice). Hyperbolicity
implies that $\rho$--centroids always exist for a uniform $\rho$
(depending on $\delta$), and indeed this condition is equivalent to
hyperbolicity. The next lemma states a few more facts that we need; the
proof, which is an exercise, is left out. 
\begin{lemma}{hyperbolic centroids}
Let $X$ be a $\delta$--hyperbolic geodesic metric space. There exist
$\delta_0,L>0$ and a function $D$, depending only on $\delta$, such that
\begin{enumerate}
\item Every triple has a $\rho$--centroid if $\rho\ge \delta_0$.
\item The diameter of the set of $\rho$--centroids of any triple is at
  most $D(\rho)$.
\item The map taking a triple to the set of its $\rho$--centroids is
  $L$--coarse-Lipschitz in the Hausdorff metric.
\end{enumerate}
\end{lemma}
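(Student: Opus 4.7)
The plan is to build a canonical ``center point'' for each triple out of the Gromov-product internal points, show that it is a $\rho$--centroid for $\rho$ large enough, and then control all other $\rho$--centroids by their distance to this canonical point.

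First, for (1), to each triple $A = \{a_1, a_2, a_3\}$ I associate the three \emph{internal points}: on the side $[a_i, a_j]$ the internal point $p_k$ sits at distance $(a_j \mid a_k)_{a_i}$ from $a_i$, where $(\cdot \mid \cdot)_{\cdot}$ is the Gromov product. A standard $\delta$--hyperbolicity exercise (comparison with a tripod) shows that $p_1, p_2, p_3$ all lie within $4\delta$ of each other, and hence each of them lies within $4\delta$ of all three sides. So taking $\delta_0 = 4\delta$, any internal point serves as a $\rho$--centroid for every $\rho \ge \delta_0$.

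For (2), I would show that any $\rho$--centroid $x$ lies within $O(\rho+\delta)$ of the internal point $p_3$ on $[a_1, a_2]$, which immediately forces the set of $\rho$--centroids to have diameter $D(\rho) = O(\rho+\delta)$. Let $q_3$ be a nearest point to $x$ on $[a_1, a_2]$, so $d(x, q_3) \le \rho$; without loss of generality $q_3$ lies on the subsegment from $a_1$ to $p_3$. The standard projection estimate in hyperbolic spaces, applied to the triangle $[q_3, a_2] \cup [a_2, a_3] \cup [a_3, q_3]$, gives $d(q_3, [a_2, a_3]) \ge d(q_3, p_3) - O(\delta)$. Combining this with $d(x, [a_2, a_3]) \le \rho$ and $d(x, q_3) \le \rho$ yields $d(q_3, p_3) \le 2\rho + O(\delta)$, and hence $d(x, p_3) \le 3\rho + O(\delta)$.

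For (3), I would use the standard fellow-traveler fact: if $d(a_i, a_i') \le t$, then $[a_i, a_j]$ and $[a_i', a_j]$ lie within $O(\delta)$ of one another outside an $O(t+\delta)$--neighborhood of $a_i$. Consequently, any $\rho$--centroid $x$ of the original triple satisfies $d(x, [a_i', a_j]) \le \rho + O(t+\delta)$ for each relevant side, so $x$ is a $(\rho + O(t+\delta))$--centroid of the perturbed triple. Applying the argument of (2) to the perturbed triple, $x$ is then within $O(\rho + t + \delta)$ of every $\rho$--centroid of the perturbed triple (which exist by part (1)). The symmetric statement gives the coarse-Lipschitz bound on the Hausdorff distance, with $L$ depending only on $\delta$ and the additive slack depending on $\rho$.

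The main obstacle is the projection argument in (2): one must use the geometry of thin triangles carefully to convert the ``close to all three sides'' condition into proximity to the internal points, rather than the more naive (and false) expectation that every centroid lies \emph{on} the geodesics. Once that estimate is in hand, (1) and (3) follow from routine manipulations with thin triangles and fellow-travelers.
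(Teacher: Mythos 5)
Your proof is correct; note that the paper itself gives no argument for this lemma (it explicitly declares the proof "an exercise" and omits it), so there is nothing to diverge from. Your solution is the standard one — internal points from Gromov products for existence, the tripod/projection estimate to pin every $\rho$--centroid within $O(\rho+\delta)$ of an internal point, and fellow-traveling of geodesics under perturbation of a vertex for the coarse-Lipschitz property — and all three steps check out.
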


In this section we will utilize this idea to 
give a centroid map for $\MCG(S)$ (or equivalently $\MM(S)$)
satisfying the first three properties of Theorem \ref{centroid}.

\begin{theorem}{centroid from projections}
There exists $\ep,\rho>0$ and  a map 
$\kappa\co\MM(S)^3 \to \MM(S)$ with the following
properties: 
\begin{enumerate}
\item $\kappa(a,b,c)$ is invariant under permutation of the arguments. 
\item $\kappa(ga,gb,gc)=g\kappa(a,b,c)$ for any $g\in\MCG(S)$.
\item $\kappa$ is Lipschitz. 
\item $\kappa(a,b,c) \in \Sigma_\ep(a,b,c)$. 
\item For each $W\subseteq S$, $\pi_W(\kappa(a,b,c))$ is a
  $\rho$--centroid of the triangle, in $\CC(W)$, with vertices $\pi_W(a)$,
$\pi_W(b)$, and $\pi_W(c)$.
\end{enumerate}
\end{theorem}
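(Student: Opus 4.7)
The plan is to construct $\kappa$ by first producing, for each triple $(a,b,c)\in\MM(S)^3$, a coordinate tuple $(x_W)_W$ with $x_W\in\CC(W)$ of idealized centroids, and then realizing it as a marking via the Consistency Theorem (Theorem \ref{consistency}). For each essential subsurface $W\subseteq S$, let $x_W(a,b,c)$ be a $\rho$-centroid in the $\delta$-hyperbolic space $\CC(W)$ of the triangle with vertices $\pi_W(a),\pi_W(b),\pi_W(c)$, as provided by Lemma \ref{hyperbolic centroids}. Uniform hyperbolicity of the curve complexes lets us fix a single $\rho\ge\delta_0$; the resulting set of $\rho$-centroids is nonempty, of diameter at most $D(\rho)$, permutation-invariant, and $\MCG$-equivariant, and I select $x_W$ inside it compatibly with these symmetries.

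The main work is to show that $(x_W)_W$ satisfies the consistency conditions C1 and C2 with constants depending only on the topology of $S$. For C2 ($U\pitchfork V$), Lemma \ref{behrstock inequality} assigns to each of $a,b,c$ one of two small coordinates, $d_U(\cdot,\boundary V)<m_0$ or $d_V(\cdot,\boundary U)<m_0$; by pigeonhole two of them share an assignment, say $d_U(a,\boundary V),d_U(b,\boundary V)<m_0$. The $\CC(U)$-geodesic from $\pi_U(a)$ to $\pi_U(b)$ then has both endpoints within $m_0$ of $\pi_U(\boundary V)$, so lies within $3m_0$ of $\pi_U(\boundary V)$ by the triangle inequality, and since $x_U$ is within $\rho$ of this geodesic we get $d_U(x_U,\boundary V)\le 3m_0+\rho$. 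For C1 ($U\subset V$ with $d_V(\boundary U, x_V)\ge c_1$ for $c_1$ chosen large), I first observe that at least two of $\pi_V(a),\pi_V(b),\pi_V(c)$ lie far from $\pi_V(\boundary U)$: if two were close, the $\CC(V)$-geodesic joining them would be close to $\pi_V(\boundary U)$, contradicting that $x_V$ is within $\rho$ of that geodesic while at distance $\ge c_1$ from $\pi_V(\boundary U)$. For each side of the triangle in $\CC(V)$ passing $\rho$-close to $x_V$, I split the side at its nearest point to $\pi_V(\boundary U)$ and apply Lemma \ref{gen geodesic projection} to the sub-geodesic containing the point near $x_V$, concluding that $\pi_U(x_V)$ is uniformly close to $\pi_U(v)$ for one of the side's endpoints $v$. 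Ranging this over the three sides and applying pigeonhole, at least two of $\pi_U(a),\pi_U(b),\pi_U(c)$ are uniformly close to $\pi_U(x_V)$, which forces the $\rho$-centroid $x_U$ of this near-degenerate $\CC(U)$-triangle to lie close to $\pi_U(x_V)$---this is exactly C1.

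Once C1 and C2 hold, Theorem \ref{consistency} yields a marking $\mu\in\MM(S)$ with $d_W(\mu,x_W)\le D$ for all $W$, and Corollary \ref{C bounds M} shows the set of such $\mu$ has uniformly bounded $\MM(S)$-diameter; I set $\kappa(a,b,c)$ to be an element of this set chosen by an $\MCG$-equivariant, permutation-symmetric selection rule (e.g., an equivariant choice function on orbits of triples). Properties (1) and (2) are then built in, property (5) holds with enlarged radius $\rho+D$ since $d_W(\pi_W(\kappa),x_W)\le D$, and property (4) (membership in $\Sigma_\ep(a,b,c)$ for $\ep=\rho+D$) follows because each $x_W$ lies on a geodesic in $\hull_W(a,b,c)$ by construction. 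Property (3), Lipschitz-ness, is derived from the coarse-Lipschitz behavior of each $\pi_W$ combined with Lemma \ref{hyperbolic centroids}(3) to bound coordinate-wise change of $(x_W)_W$ under a unit change in $(a,b,c)$, and then from the quasidistance formula (Theorem \ref{distance formula}, via Corollary \ref{C bounds M}) to translate this into a bounded $\MM(S)$-change of $\kappa$. The main obstacle I expect is verifying C1, where the pigeonhole-over-sides argument and the generalized geodesic projection lemma must be tuned together with a sufficiently large choice of $c_1$ to make all the bounds align.
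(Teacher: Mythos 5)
Your construction is the same as the paper's: build a tuple $(x_W)_W$ of $\rho$--centroids in the curve complexes $\CC(W)$, verify the consistency conditions C1--C2, realize the tuple as a marking via the Consistency Theorem, and then read off properties (1)--(5) exactly as you do. Your verification of C2, by pigeonholing Lemma \ref{behrstock inequality} over the three vertices and noting that a geodesic with both endpoints within $m_0$ of $\pi_U(\boundary V)$ stays within $3m_0$ of it, is correct and in fact more direct than the paper's. For C1 the paper takes a simpler route: since $d_V(x_V,\boundary U)$ is large, part (2) of Lemma \ref{hyperbolic centroids} shows $\pi_V(\boundary U)$ is not a centroid, so some \emph{entire} leg $g$ of the triangle in $\CC(V)$ satisfies $d_V(g,\boundary U)>m_1$, and Lemma \ref{gen geodesic projection} applies to all of $g$ at once, making both endpoints of that leg (and hence $x_U$ and $\pi_U(x_V)$) coarsely coincide in $\CC(U)$. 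Your side-splitting version has a local error as written: if you split a side at its nearest point $w$ to $\pi_V(\boundary U)$, the sub-geodesic containing the point $z$ near $x_V$ still has $w$ as an endpoint, and $d_V(w,\boundary U)$ may be $0$, so the hypothesis of Lemma \ref{gen geodesic projection} fails for that piece. The repair is standard: every point of the side within $m_1$ of $\pi_V(\boundary U)$ lies within $2m_1$ of $w$, so (once $d_V(z,\boundary U)>3m_1$, guaranteed by taking $c_1$ large) all such points lie on one side of $z$, and the sub-geodesic from $z$ to the opposite endpoint avoids the $m_1$--neighborhood of $\boundary U$; the lemma then applies to that sub-geodesic. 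With that adjustment, and your concluding pigeonhole over the three sides (which actually yields at least two of $\pi_U(a),\pi_U(b),\pi_U(c)$ close to $\pi_U(x_V)$, forcing $x_U$ close to $\pi_U(x_V)$), the argument goes through.
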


\begin{proof}
Let $\delta$ be a hyperbolicity constant for $\CC(W)$ for all
$W\subseteq S$, let $\delta_0$ be the constant given in Lemma
\ref{hyperbolic centroids}, and fix $\ep>\delta_0$. 
For each $\MCG$--orbit of triple $(a,b,c)$ choose a representative 
$A=\{a_1,a_2,a_3\}\subset\MM(S)$. For any $W\subseteq S$,  
the triangle $\union_{i,j} [\pi_W(a_i),\pi_W(a_j)]$ (which is 
coarsely equal to $\hull_W(A)$) has an $\ep$--centroid. 
Choose such an $\ep$--centroid and call it $x_W$. 
We will show
that $(x_W)\in\prod_W \CC(W)$ satisfies conditions C1-2 of the
consistency theorem, for suitable $c_1,c_2$.

Consider $V,W\subset S$ satisfying $\boundary V \pitchfork W$. 
Suppose that $d_W(x_W, \boundary V) > D(\max(m_0,\ep))$, where 
$D(\cdot)$ is the function given by Lemma~\ref{hyperbolic centroids}.
Then by part (2) of Lemma \ref{hyperbolic centroids}, $\pi_W(\boundary V)$ is
not an $m_0$--centroid for $\pi_W(A)$, and so  for at least one leg $g$
of $\hull_W(A)$, $d_W(\boundary V,g) > m_0$. 
Suppose without loss
of generality that $g =  [\pi_W(a_1),\pi_W(a_2)]$. 

By Lemma \ref{gen geodesic projection}, this gives us an upper bound
$\diam_V(g) \le m_2$. In other words $\pi_V(a_1)$ and $\pi_V(a_2)$ are
close together and hence the centroid $x_V$ is close to both. 

Since $x_W$ is within $\ep$ of $g$ in $\CC(W)$, and $d_W(\boundary
V,g) > \ep$, we may connect $x_W$ to $g$ by a path of length at most 
$\ep$ consisting of
curves that all intersect $V$, and so the Lipschitz property of
$\pi_V$ gives an upper bound on $d_V(x_W,g)$. From this and the 
previous paragraph, we conclude that there
is a bound of the form
$$
d_V(x_V,x_W) < m_3
$$
for suitable uniform $m_3$. 

When $V\subset W$, this establishes C1. 

When $V\not\subset W$, i.e., $V\pitchfork W$, 
since we have assumed $d_W(x_W,\boundary V)$ is large, 
the direction of the Consistency Theorem given by 
Lemma~\ref{behrstock inequality} yields a bound on 
$d_V(x_W,\boundary W)$. Since we already have a bound on 
$d_V(x_V,x_W)$ by the above, this in turn bounds 
$d_V(x_V,\boundary W)$, and thus establishes C2. 

Having established C1 and C2, we can apply the Consistency Theorem to
conclude that there exists $\mu\in\MM(S)$ with $d_W(\mu,x_W)$
uniformly bounded for
all $W$. Let this $\mu$ be $\kappa(A)$. 
(Note that, by the quasidistance formula, $\mu$ is determined
up to bounded error.) For any triple $A'=(a,b,c)$ satisfying 
$gA=A'$ define $\kappa(A')=g\kappa(A)$.

By construction, $\kappa$ satisfies conditions (2) and (5) of the theorem. 

Condition (1) is evident since the construction depended on the
unordered set $A$. Condition (3), the Lipschitz property, follows
immediately from the quasidistance formula and the 
coarse-Lipschitz property
of hyperbolic centroids (part (3) of Lemma \ref{hyperbolic 
centroids}); note that in this case we obtain a Lipschitz map and not 
just a coarse-Lipschitz one, since there is a lower bound on the 
distance in $\MM(S)$ between pairs of distinct points.
Condition (4) follows from (5) and the definition of $\Sigma_\ep$.
\end{proof}

\section{Polynomial bounds}
\label{polynomial bounds}

It remains to prove that the map $\kappa$ of Theorem \ref{centroid from
  projections} satisfies the polynomial bound of part (4) of Theorem \ref{centroid}.
That is, letting
$$
K(x,y,r) = \left\{\strut \kappa(x,y,z): z\in \NN_r(x)\right\}
$$
(where $\NN_r(Y)$ denotes a neighborhood of $Y\subset\MM(S)$ of 
radius $r$)
we need to
find a polynomial in $r$, independent of $x$ and $y$, which bounds
$\#K(x,y,r)$. Throughout this section, $r$ will denote a 
fixed positive real 
number and the points $x,z$ will satisfy $z\in \NN_r(x)$.

\subsection{Reduction to $\Sigma$--hulls}
We first reduce the problem to that of counting the number of elements
in a suitable $\Sigma$--hull. 
 
If $\mu = \kappa(x,y,z)$, then by definition of $\kappa$
and of hyperbolic centroids, 
for each $W\subseteq S$ the set $\pi_W(\mu)$ is in a uniformly
bounded neighborhood of $[x,y]_W$. It follows that
$$K(x,y,r)\subset\Sigma_{\ep'}(x,y)$$
for suitable $\ep'$ (depending on $\ep$ and the hyperbolicity
constant).
Moreover, $\mu\in\Sigma_{\ep'}(x,z)$ by the same argument, so 
$d(x,\mu) \le \diam(\Sigma_{\ep'}(x,z)) \le b(d(x,z)+1)$ (by Lemma
\ref{Sigma properties}) and thus $\mu\in \NN_{b(r+1)}(x)$. 
Hence (changing variable names) it suffices to give a polynomial bound
in $r$, depending on $\ep$ but not on $(x,y)$,  on the cardinality of 
$$A(x,y,r)=\Sigma_{\ep}(x,y)\cap \NN_{r}(x).$$

The following lemma indicates that $A(x,y,r)$ is contained in a 
$\Sigma$--hull that gives a good estimate on its size:
\begin{lemma}{reduce to Sigma}
Given $\ep$ there exists $\ep'$ such that, 
 for each $x,y\in \MM(S)$ and each $r>1$, there exists
    $q\in\Sigma_{\ep'}(x,y)$ for which 
$$
\Sigma_\ep(x,y)\cap \NN_{r}(x) \subseteq
    \Sigma_{\ep'}(x,q)
$$ 
and satisfying $d(x,q)<a r$, with $b$ depending only
    on $\epsilon$ and $\xi(S)$.
\end{lemma}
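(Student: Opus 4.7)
The plan is to specify a tuple of target projections $(q_W)_{W \subseteq S}$, with each $q_W$ lying on $[x,y]_W$ sufficiently far past the projection of every $\mu\in A := \Sigma_{\ep}(x,y)\cap\NN_r(x)$, then realize $q\in\MM(S)$ with $\pi_W(q)$ coarsely equal to $q_W$ via the Consistency Theorem (Theorem \ref{consistency}). The first step is to bound the subsurface projection distances of elements of $A$: because $\pi_W\co\MM(S)\to\CC(W)$ is coarse-Lipschitz with constants depending only on $\xi(S)$, one has $d_W(x,\mu) \le c_1 r + c_0$ for every $\mu\in A$ and every $W$. Combined with $\mu\in\Sigma_\ep(x,y)$, this places $\pi_W(\mu)$ within $\ep$ of an initial subarc of $[x,y]_W$ of length at most $c_1 r + c_0$. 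Set $n_W := \sup_{\mu \in A} d_W(x,\mu) \le c_1 r + c_0$.

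For each connected subsurface $W \subseteq S$, pick a vertex $q_W$ on $[x,y]_W$ at distance $\min(n_W + K, d_W(x,y))$ from $\pi_W(x)$, where $K$ is a uniform buffer. I next verify that $(q_W)_W$ satisfies conditions C1 and C2 of Theorem \ref{consistency}. These verifications closely follow the ones in the proof of Theorem \ref{centroid from projections}: for C2 with $V\pitchfork W$, if $d_W(q_W,\boundary V)$ is large, then applying the generalized geodesic projection lemma (Lemma \ref{gen geodesic projection}) to the subarc of $[x,y]_W$ from $\pi_W(x)$ to $q_W$ forces $\diam_V$ of that subarc to be small, which together with Behrstock's inequality (Lemma \ref{behrstock inequality}) bounds $d_V(q_V,\boundary W)$; the nested case C1 is analogous. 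The marking $q\in\MM(S)$ produced by Theorem \ref{consistency} satisfies $d_W(q, q_W)\le D$ for all $W$, hence $q\in\Sigma_{\ep'}(x,y)$ for a uniform $\ep'$. The containment $A\subseteq\Sigma_{\ep'}(x,q)$ then follows because $[x,q]_W$ fellow-travels the subarc of $[x,y]_W$ from $\pi_W(x)$ to $q_W$ up to $\delta$-error, and by choice that subarc is long enough to contain an $\ep$-neighborhood of every $\pi_W(\mu)$ with $\mu\in A$.

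The main obstacle is establishing the linear bound $d(x,q) < ar$. Direct application of the Quasidistance Formula gives $d(x,q) \approx \sum_W \truncate{d_W(x,q)}{A_0}$, with each term bounded by $n_W + O(1) = O(r)$; the trouble is controlling the number of contributing subsurfaces. I plan to handle this by extracting a subcollection $A' = \{\mu_1,\ldots,\mu_k\} \subseteq A$ of size $k = k(\xi(S))$ with the property that for every $W$ with significant $d_W(x,q)$, some $\mu_i\in A'$ satisfies $d_W(x,\mu_i) \ge n_W - O(1)$. The extraction uses the partial order $\prec_{k-1}$ on subsurfaces in $\FF_k(x,y)$ (Lemmas \ref{partial order} and \ref{four inequalities}): among pairwise overlapping subsurfaces with simultaneously large projections, the partial order linearly orders them so a single $\mu_i$ near the maximal element witnesses them all, while an antichain of pairwise non-overlapping subsurfaces is bounded in size by $\xi(S)$. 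Given such $A'$, the bound $d_W(q,\hull_W(\{x\}\cup A')) \le \ep''$ holds for every $W$ (since $q_W$ and the nearby $\pi_W(\mu_i)$ both lie close to the same point of $[x,y]_W$, so $q_W$ is $\delta$-close to $[x,\mu_i]_W$), giving $q\in\Sigma_{\ep''}(\{x\}\cup A')$. Lemma \ref{Sigma properties} then yields $d(x,q) \le \diam \Sigma_{\ep''}(\{x\}\cup A') \le b_{k+1}(\diam(\{x\}\cup A')+1) \le b_{k+1}(r+1)$, producing the required linear bound with $a$ depending only on $\ep$ and $\xi(S)$.
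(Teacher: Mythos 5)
Your overall architecture matches the paper's: for each $W$ put a target point on $[x,y]_W$ just beyond the farthest projection of $A=\Sigma_\ep(x,y)\cap \NN_r(x)$, realize the tuple as a marking $q$ via the Consistency Theorem, and control $d(x,q)$ by producing a collection of at most $O(\xi(S))$ witnesses in $A$ using the partial order on overlapping subsurfaces with large projections. Your closing step is a legitimate variant: the paper compares the quasidistance formulas for $d(x,q)$ and for the $d(x,u_i)$ term by term, whereas you deduce $q\in\Sigma_{\ep''}(\{x\}\cup A')$ and invoke Lemma \ref{Sigma properties}; both routes give the linear bound, and yours is arguably tidier (note Lemma \ref{Sigma properties} is stated for sets of fixed cardinality, which is why the bound $\#A'\le 2\xi(S)$ matters there too).

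The genuine gap is the verification of C1--C2 for the tuple $(q_W)_W$, which you assert ``closely follows'' the centroid case. It does not, and the one-line justification you give is invalid: from the hypothesis that $d_W(q_W,\boundary V)$ is large you cannot apply Lemma \ref{gen geodesic projection} to the subarc of $[x,y]_W$ from $\pi_W(x)$ to $q_W$, because that lemma requires $\boundary V$ to be far from the \emph{entire} geodesic, and $\pi_W(\boundary V)$ may sit right next to $\pi_W(x)$, i.e.\ at the start of your subarc. In that situation $\diam_V$ of the subarc need not be small at all, and your bound on $d_V(q_V,\boundary W)$ evaporates. The paper resolves this with a dichotomy: since $q_W$ lies on $[x,y]_W$ and is far from $\pi_W(\boundary V)$, the latter must be far from either the initial segment $[x,q_W]_W$ or the terminal segment $[q_W,y]_W$. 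Your sketch covers only (a version of) the first case. The second case needs a genuinely different argument that exploits the defining property of the targets as suprema over all of $A$: one first bounds $d_V(q_W,y)$ by projecting the terminal segment, transfers this to a bound on $d_V(\mu,y)$ for an element $\mu\in A$ realizing $q_W$, and then uses the fact that $q_V$ lies between the shadow of $\pi_V(\mu)$ and $\pi_V(y)$ on $[x,y]_V$ to conclude $d_V(q_V,\boundary W\cup q_W)$ is bounded. Without this case the consistency of $(q_W)_W$ -- and hence the existence of $q$ -- is not established. (Separately, the inequality $d(x,q)<ar$ with ``$b$ depending only on $\ep$ and $\xi(S)$'' in the statement is a typo in the paper, not an issue with your argument.)
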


\begin{proof} 
For each $W\subseteq S$, by the 
definition of $\Sigma$--hulls, we have that $\pi_{W}(A(x,y,r))$ is
contained in the $\epsilon$--neighborhood of a $\CC(W)$--geodesic
$[x,y]_W$
between a point of $\pi_{W}(x)$ and a point of $\pi_{W}(y)$. 
Let $m_W$ denote the
vertex of $[x,y]_W$ which is within $\ep$ of $\pi_W(A(x,y,r))$ and 
is farthest from $\pi_{W}(x)$.

\subsubsection*{Finding $q$ using consistency}
We now claim that the tuple, $(m_W)_{W\subseteq S}$, satisfies the consistency
conditions of Theorem~\ref{consistency} 
and thus gives rise to a marking, which we will call~$q$.

For convenience we note that we can simultaneously establish 
both C1 and C2 by showing the following: there exists a uniform 
constant such that for any $U,V\subseteq S$ such
that $\boundary U\pitchfork V$, either $d_V(m_V,\boundary U)$ or 
$d_U(m_U,\boundary V \union m_V)$ is bounded by this constant. 
Here by $\boundary V \union m_V$ we mean the union as curve systems,
or equivalently the join as simplices in $\CC(S)$.

Thus, let $U$ and $V$ be such that $\boundary
U\pitchfork V$ and 
\begin{equation}\label{bdry U mV far}
d_V(m_V,\boundary U) > 2(\ep+2).
\end{equation}

Let $\mu\in A(x,y,r)$ be such that $\pi_V(\mu)$ is within $\ep$ of
$m_V$, and let $\nu\in A(x,y,r)$ be such that $\pi_U(\nu)$ is within
$\ep$ of $m_U$.  Since $d_V(\boundary U,m_V) > 2\ep+2$, there is a
$\CC(V)$--path from $\pi_V(\mu)$ to $m_V$  consisting of curves that
intersect $U$, so by the Lipschitz property of $\pi_U$ we have a bound
$$
d_U(m_V,\mu) < b_1
$$
for some uniform $b_1$. In fact, since $m_V$ and $\boundary V$ are
disjoint we may instead write 
\begin{equation}\label{mu near mV in U}
d_U(m_V\union\boundary V,\mu) < b_{1}.
\end{equation}

Since $m_V$ lies on $[x,y]_V$, the bound (\ref{bdry U mV far}) also
implies, by the triangle inequality, that $\pi_V(\boundary U)$ cannot
be within $\ep+2$ of both $[x,m_V]_V$ and $[m_V,y]_V$. This gives us 
two cases which we treat separately.

\subsubsection*{Case a:} Suppose $\pi_V(\boundary U)$ is more than $\ep+2$
from $[x,m_V]_V$. 

Let $\sigma\in A(x,y,r)$, and 
let $t\in[x,y]_V$ be within $\ep$ of $\pi_V(\sigma)$. By definition of
$m_V$, $t$ must be in $[x,m_V]$. Hence, again by the Lipschitz
property of $\pi_U$, we have
$$
d_U(\sigma,t) \le b_1.
$$
The bounded geodesic projection lemma implies that
$$
d_U(x,t) \le B,
$$
so we have a bound on $d_U(x,\sigma)$. Applying this to $\nu$, 
which was chosen to satisfy $d_{U}(\nu,m_{U})< \ep$, we get
$$
d_U(x,m_U) \le b_2
$$
for suitable $b_2$. 
By construction of $m_{U}$, the above bound implies 
$d_U(x,\mu)\le b_{2}$ as well. Hence, by the triangle inequality, 
for suitable $b_{3}$ we have:  
$$
d_U(m_U,\mu) \le b_3.
$$ 
Hence by (\ref{mu near mV in U}) we conclude there exists a uniform 
constant, $b_{4}$, satisfying:
$$
d_U(m_U,m_V\union \boundary V) \le b_4
$$
which is what we wanted to show. 

\subsubsection*{Case b:}
Suppose $\pi_V(\boundary U)$ is more than $\ep+2$ from $[m_V,y]_V$.

Then, by the geodesic projection lemma, 
$$
d_U(m_V,y) \le B,
$$
and applying (\ref{mu near mV in U}) again we have
$$
d_U(\mu,y)\le b_5.
$$
Let $t\in[x,y]_U$ be within $\ep$ of $\pi_U(\mu)$ --- then we have
$$d_U(t,y) \le b_5+\ep.
$$
By definition, $m_U$ is  in $[t,y]_U$, so by
the triangle inequality
$$
d_U(m_U,m_V\union \boundary V) \le b_6, 
$$
and again we are done.

Having established C1-2 for $(m_W)$, the Consistency Theorem gives us
$q\in\MM(S)$ such that 
\begin{equation}
    \label{equation:b7}d_W(q,m_W) < b_7
\end{equation}
for a uniform $b_7$. 
By definition of $m_W$, we have that $\pi_W(A(x,y,r))$ is within $\ep$ of
$[x,m_W]_W$ for each $W$, and hence by hyperbolicity of $\CC(W)$ this 
set is within a suitable $\ep'$ of $[x,q]_W$. In other words,
$$
A(x,y,r) \subset \Sigma_{\ep'}(x,q).
$$

\subsubsection*{Bounding $d(x,q)$}
It remains to check that $d_{\MM(S)}(x,q)<br$, for a uniform $b$. 

Fix $c > \max\{A_{0}, m_{0}+b_{7}+\ep+ 3\}$, where $A_{0}, m_{0}$, and 
$b_{7}$ are the constants of 
Theorem~\ref{distance formula}, Lemma~\ref{behrstock inequality}, and 
Equation~(\ref{equation:b7}). Recall from Section~\ref{background} 
the set of subsurfaces
$\FF_{3}(x,q)$, with its partial ordering $\prec_2$, where
$U\in\FF_3(x,q)$ iff $d_U(x,q) > 3c$. Using $3c$ as a threshold in the
quasidistance formula, we have
$$
d(x,q) \approx \sum_{W\in\FF_3(x,q)} d_W(x,q), 
$$
which we will use to get an upper bound on $d(x,q)$. Let $\UU$ be the
set of maximal elements of $\FF_3(x,q)$ with respect to the partial
order $\prec_2$. Since 
overlapping subsurfaces are $\prec_2$--ordered (Lemma \ref{partial order}),
the elements $U_i$ of $\UU$ are either disjoint or nested, which means
there are at most $2\xi(S)$ of them (the bound of $2\xi(S)$ is an 
easy exercise, but all that matters is that there is some universal 
constant).

Let $u_i\in A(x,y,r)$ be a marking such that $d_{U_i}(u_i,m_{U_i}) <
\ep$ (this exists by definition of $m_{U_i}$). Moreover, 
by definition of $q$ we know that 
$d_{U_i}(q,m_{U_i})$ is uniformly bounded by $b_{7}$, hence we have
\begin{equation}\label{q ui close in Ui}
d_{U_i}(q,u_i) < b_7 + \ep.
\end{equation}
We claim that for each $W\in \FF_3(x,q)$, there exists  at least one of 
the $u_i$ which satisfies 
\begin{equation}\label{q ui close in W}
d_W(q,u_i) < b
\end{equation}
for some uniform constant $b$. 
For $W$ an element of $\UU$  we have just
established  this. For any other $W$, we must have $W\prec_2 U_i$ for
some $U_i\in\UU$, so $W\pitchfork U_i$ and so we 
have (from Lemma \ref{four inequalities}) the inequalities 
\begin{equation}\label{W less Ui 1}
d_W(q,\boundary U_i) < c
\end{equation}
and
\begin{equation}\label{W less Ui 2}
d_{U_i}(x,\boundary W) < c.
\end{equation}
Now from (\ref{q ui close in Ui}) and the fact that $d_{U_i}(x,q) >
3c$, the triangle inequality yields 
$d_{U_i}(x,u_i) > 3c-3-b_7-\ep > 2c$, where the $3$ is being subtracted 
because $\diam_{U_{i}}(u_{i})\le 3$. Together with 
(\ref{W less Ui 2}) we then have
$$
d_{U_i}(\boundary W, u_i) > c.
$$
Hence by Lemma \ref{behrstock inequality}, 
$$
d_{W}(\boundary U_i, u_i) < c_2.
$$
Now with (\ref{W less Ui 1}), we get an inequality of the form
$$
d_W(q,u_i) < b
$$
for a uniform $b$. This establishes (\ref{q ui close in W}), which 
proves the claim. 

Now this means that 
$$
d_W(x,q) \le d_W(x,u_i) + b.
$$
In the quasidistance formula (Theorem \ref{distance formula}), we may choose any sufficiently large
threshold at a cost of changing the approximation constants. Hence we
may choose a threshold greater than $2b$, and then for every term
$d_W(x,q)$ larger than the threshold we have 
$$
\frac12 d_W(x,q) \le d_W(x,u_i).
$$
It then follows that every term in the quasidistance formula for $d(x,q)$
appears, with a multiplicative error,
in the quasidistance formula for one of the $d(x,u_i)$. 
This means 
$$
d(x,q) \le \sum_i a' d(x,u_i) + b' \le 2\xi(S)( a' r + b')
$$
where $a',b'$ come from Theorem \ref{distance formula}. A bound of the form
$d(x,q) < ar$ follows since $r>1$. 
\end{proof}

\subsection{Polynomial bound on $\Sigma$--hulls}
Now that our set $K(x,y,r)$ is known to be contained in a
$\Sigma$--hull of comparable diameter, it suffices to 
obtain an appropriate polynomial bound on the $\Sigma$--hull of two  
points. 

\begin{theorem}{polynomial bound}
Given $\ep>0$, there is a constant $c=c(\ep,S)$,
such that, for any $x,y\in\MM(S)$,  
$$
\#\Sigma_\ep(x,y) \le c d(x,y)^{\xi(S)}. 
$$
\end{theorem}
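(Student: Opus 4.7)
The plan is to prove the theorem by induction on the complexity $\xi(S)$, extending both $\xi$ and the notion of $\Sigma$-hull to (possibly disconnected) surfaces via the product structure of $\MM(\cdot)$. The base case $\xi(S)=1$ is handled directly: here $\MM(S)$ is a locally finite $\delta$-hyperbolic graph, being quasi-isometric to the Farey graph $\CC(S)$. For any $\mu\in\Sigma_\ep(x,y)$, the quasidistance formula (Theorem~\ref{distance formula}) combined with the constraints $\pi_W(\mu)\in\nbd_\ep([x,y]_W)$ forces $d(x,\mu)+d(\mu,y)\le d(x,y)+O(1)$; hyperbolicity then places $\mu$ in a uniformly bounded $\MM$-neighborhood of a geodesic from $x$ to $y$, and local finiteness yields $|\Sigma_\ep(x,y)|\le c\,d(x,y)=c\,d(x,y)^{\xi(S)}$.

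For the inductive step (assuming the statement for all surfaces of complexity less than $\xi(S)$), I fix a geodesic $g=v_0 v_1\cdots v_n$ in $\CC(S)$ from $\pi_S(x)$ to $\pi_S(y)$, with $n=d_S(x,y)$, and partition $\Sigma_\ep(x,y)=\bigsqcup_{i=0}^n S_i$ with $\mu\in S_i$ when $v_i$ is the closest vertex of $g$ to $\pi_S(\mu)$ (breaking ties arbitrarily). The target for each $i$ is the bound
\[
|S_i|\le C\,(d_{v_i}(x,y)+1)\,d(x,y)^{\xi(S)-1}.
\]
Once this is in hand, summing uses Theorem~\ref{distance formula} applied to the annuli $\{A_{v_i}\}_{i=0}^n$ to get $\sum_i d_{v_i}(x,y)\lesssim d(x,y)$, together with $n+1\le d(x,y)+1$, yielding the desired $|\Sigma_\ep(x,y)|\le c\,d(x,y)^{\xi(S)}$.

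To bound $|S_i|$, I perform a snapping step sending each $\mu\in S_i$ to a nearby marking $\mu^*$ with $v_i$ in its base: starting from the tuple $(\pi_W(\mu))_W$, modify the $W=S$ entry to a vertex containing $v_i$, and invoke the Consistency Theorem (Theorem~\ref{consistency}) to realize the modified tuple by a marking $\mu^*$. Conditions C1--C2 are verified using the $\ep$-closeness of $v_i$ and $\pi_S(\mu)$ in $\CC(S)$ together with Lemma~\ref{behrstock inequality}; the quasidistance formula guarantees $d_\MM(\mu,\mu^*)$ is uniformly bounded, so by local finiteness of $\MM(S)$ the map $\mu\mapsto\mu^*$ has bounded multiplicity. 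It therefore suffices to count $\mu^*\in\QQ(v_i)\cap\Sigma_{\ep'}(x,y)$ for a slightly enlarged $\ep'$. By Lemma~\ref{product regions}, $\QQ(v_i)$ is quasi-isometric to $\MM(A_{v_i})\times\MM(S\setminus v_i)$; writing $\mu^*=(t,\nu)$, the annulus coordinate $t\in\Z\cong\CC(A_{v_i})$ is forced into the $\ep'$-neighborhood of $[x,y]_{v_i}$, contributing $O(d_{v_i}(x,y)+1)$ choices, while $\nu$ lies in a $\Sigma$-hull inside $S\setminus v_i$ (of complexity $\xi(S)-1$). The inductive hypothesis, extended to disconnected complements using the product structure and $\prod_j d_j^{\xi_j}\le d(x,y)^{\sum_j\xi_j}$, gives at most $c'\,d(x,y)^{\xi(S)-1}$ choices for $\nu$, completing the bound on $|S_i|$.

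The main obstacle is the snapping step: I must certify that the modified tuple actually satisfies the consistency conditions C1--C2, and that the assignment $\mu\mapsto\mu^*$ has genuinely controlled multiplicity. Both hinge on the interplay among the $\CC(S)$-closeness of $v_i$ and $\pi_S(\mu)$, Behrstock's inequality, and local finiteness of $\MM(S)$; if a clean bounded-distance snap cannot be exhibited directly, one can instead choose $v_i$ to be a vertex of $g$ that is nearest to some curve actually appearing in the base of $\mu$, which reduces the necessary adjustment to a uniformly bounded number of elementary moves. A secondary point is ensuring the estimate $\sum_i d_{v_i}(x,y)\lesssim d(x,y)$ holds without extra polynomial loss, which I obtain from Theorem~\ref{distance formula} by absorbing sub-threshold annular contributions into the $(n+1)$ term.
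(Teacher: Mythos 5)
Your induction, base case, and the final summation $\sum_i (d_{v_i}(x,y)+1)\lesssim d(x,y)$ are fine, but the heart of the argument --- the snapping step $\mu\mapsto\mu^*\in\QQ(v_i)$ --- fails, and in fact the per-slab bound $|S_i|\le C(d_{v_i}(x,y)+1)\,d(x,y)^{\xi(S)-1}$ that it is meant to establish is false. The displacement $d_{\MM(S)}(\mu,\mu^*)$ is, by the quasidistance formula, coarsely $\sum_{W\pitchfork v_i}\truncate{d_W(\mu,v_i)}{A}$, since any marking containing $v_i$ in its base projects near $\pi_W(v_i)$ in every $W$ overlapping $v_i$. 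Nothing in the definition of $\Sigma_\ep(x,y)$ or in the condition ``$\pi_S(\mu)$ is closest to $v_i$'' bounds these terms: the $\CC(S)$--shadow of $\mu$ does not control where $\mu$ sits along $[x,y]_W$ for subsurfaces $W$ whose boundary is $\CC(S)$--close to $v_i$. Your fallback (choose $v_i$ near a base curve of $\mu$) does not help, because replacing one base curve by a nearby one is not a bounded number of elementary moves; its cost is again a sum of subsurface projection distances that can be arbitrarily large. Concretely, take $\xi(S)=2$ and two disjoint curves $\beta_1,\beta_2$ with $d_{A_{\beta_1}}(x,y)=d_{A_{\beta_2}}(x,y)=R$ and all other projections bounded, so $d(x,y)\asymp R$. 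The markings $T_{\beta_1}^{s_1}T_{\beta_2}^{s_2}(x)$ with $0\le s_1,s_2\le R$ all lie in $\Sigma_\ep(x,y)$ and all have $\CC(S)$--shadow within uniformly bounded distance of $\pi_S(x)$, so roughly $R^2$ of them land in a single slab $S_i$ with $d_{v_i}(x,y)=O(1)$; your claimed slab bound would give only $O(R)$. The decomposition by closest vertex of $[x,y]_S$ is simply too coarse to see the rank--$2$ product structure hiding over a single $\CC(S)$--position.

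This is precisely the difficulty the paper's construction is built to handle. Instead of slicing along $[x,y]_S$, for each $\mu$ one looks at $\FF_3(\mu,y)$ with its partial order $\prec_2$ and takes $U$ to be the union of the inclusion--maximal $\prec_2$--minimal subsurfaces; $\mu$ is then shown (Covering Lemma) to be uniformly close to $\GG(U,x,y)=\Sigma_{\ep,U}(x,y)\times\{\pi_{\MM(U^c)}(y)\}\subset\QQ(\boundary U)$, with degenerate pieces $\GG(p,y)$ along $[x,y]_S$ handling the case where the minimal subsurfaces are far from $\mu$ in $\CC(S)$. Crucially $U$ may be disconnected and of full complexity $\xi'(U)=\xi(S)$ (as in the example above, where $U=A_{\beta_1}\cup A_{\beta_2}$), so the induction is applied componentwise to $\GG(U,x,y)$ rather than to a complexity--$(\xi(S)-1)$ complement; the exponent $\xi(S)$ is recovered because the diameters of the pieces, not their number, sum to $O(d(x,y))$. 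That diameter--sum bound is the other genuinely hard step your outline has no analogue of: one must show a fixed subsurface $V$ occurs in only boundedly many of the chosen $U$'s, which the paper does via footprints of tight geodesics in a hierarchy (Lemma on footprints at ends). If you want to salvage your approach, you would need to replace the single annulus $A_{v_i}$ by the full (possibly disconnected) collection of subsurfaces where $\mu$ still differs substantially from $y$, which leads you back to the paper's argument.
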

Note by Lemma \ref{Sigma properties} that
$d(x,y)$ is interchangeable, up to bounded factor, with $\diam
(\Sigma_\ep(x,y))$, and we will freely make use of that. 

\begin{proof}
The idea of the proof is to (coarsely) cover $\Sigma_\ep(x,y)$ by sets
for which the desired inequality holds by induction, and the sum of
whose diameters is bounded by the diameter of $\Sigma_\ep(x,y)$. 

Given $x$ and $y$,
we will construct a finite collection $\Gamma=\Gamma_{x,y}$ of finite 
subsets of
$\MM(S)$, and a map $\gamma\co \Sigma_\ep(x,y) \to \Gamma$. The proof will
follow from three lemmas about this construction. 
The first one states that the bound of the theorem holds for the sets
in $\Gamma$:
\begin{lemma}{inductive bound}{\rm (Inductive Bound Lemma).}
For each $\GG\in\Gamma_{x,y}$,
\begin{equation*}
\#\GG \le b_1 \  \diam(\GG)^{\xi(S)}.
\end{equation*}
where $b_1$ depends only on $S$.
\end{lemma}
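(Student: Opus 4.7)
The plan is to prove the Inductive Bound Lemma by induction on the complexity $\xi(S)$, where the elements of $\Gamma_{x,y}$ are presumably constructed so that each $\GG$ comes equipped with a curve system $\Delta = \Delta_\GG \ne \emptyset$ with $\GG$ lying in a bounded neighborhood of the product region $\QQ(\Delta)$. The heart of the argument is then to push the counting through the product structure provided by Lemma~\ref{product regions}.

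For the base case $\xi(S)=1$, the curve complex $\CC(S)$ is the Farey graph, $\MCG(S)$ is hyperbolic, and the quasidistance formula degenerates to a single subsurface projection. In this setting $\Sigma_\ep(x,y)$ is uniformly close to a single $\CC(S)$-geodesic, and $\MM(S)$ is coarsely a tree-like structure whose points in a ball of radius $R$ around the geodesic number at most linearly in $R$. Thus $\#\GG \le b_1\diam(\GG)$, matching $\diam(\GG)^{\xi(S)}$.

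For the inductive step, assuming the bound for all proper subsurfaces, I would use the curve system $\Delta = \Delta_\GG$ associated to $\GG$. By Lemma~\ref{product regions}, up to uniform additive error
\[
\QQ(\Delta) \to \prod_{W\in\sigma(\Delta)} \MM(W)
\]
is a quasi-isometry, so $\GG$ injects (up to bounded multiplicity) into a product $\prod_{W\in\sigma(\Delta)} \GG_W$, where each $\GG_W = \pi_{\MM(W)}(\GG)$ is, by construction, of the form of an element in $\Gamma_{x_W,y_W}$ for the appropriate endpoints in $\MM(W)$. Since $\xi'(W) < \xi'(S)$ for each $W \in \sigma(\Delta)$, the inductive hypothesis applied in each factor gives $\#\GG_W \le b_1\diam(\GG_W)^{\xi'(W)}$. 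Because the metric on the product is the $\ell^1$ sum of the factor metrics, $\diam(\GG_W) \le \diam(\GG) + O(1)$ for each $W$, and multiplying the inductive bounds together yields
\[
\#\GG \;\le\; \prod_{W\in\sigma(\Delta)} b_1\bigl(\diam(\GG)+O(1)\bigr)^{\xi'(W)} \;\le\; b_1'\,\diam(\GG)^{\sum_W \xi'(W)} \;=\; b_1'\,\diam(\GG)^{\xi(S)},
\]
using $\sum_{W\in\sigma(\Delta)} \xi'(W) = \xi'(S) = \xi(S)$.

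The main obstacle will be ensuring that the scheme producing $\Gamma_{x,y}$ genuinely delivers elements with this product structure, and that the diameters of the factor projections $\GG_W$ are not merely controlled by $\diam(\GG)$ in the abstract but in a way compatible with the inductive hypothesis (i.e., each $\GG_W$ must itself be a legitimate member of $\Gamma$ at lower complexity, so that the hypothesis applies). A secondary subtlety is handling the case where $\diam(\GG)$ is small, to absorb the additive $O(1)$ slack — this is harmless since $\xi(S) \ge 1$ and any singleton satisfies the bound trivially, but must be stated carefully to give a uniform $b_1$ independent of $(x,y)$.
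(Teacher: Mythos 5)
Your overall strategy---exploit the product structure of $\QQ(\Delta)$ via Lemma \ref{product regions}, use additivity of $\xi'$ over the factors, and induct on complexity---is exactly the paper's, and your treatment of the additive slack and of small diameters is fine (the paper pads the degenerate sets $\GG(p,y)$ with an extra point precisely so that $\diam\ge 1$). But the obstacle you flag at the end is real, and your proposed resolution points in the wrong direction. In the paper's construction the sets of $\Gamma_{x,y}$ come in two types: degenerate sets $\GG(p,y)$ with $\#\GG=2$ and $\diam(\GG)=1$ (trivially fine), and sets
$$\GG(U,x,y)=\Bigl(\textstyle\prod_i \Sigma_{\ep,U_i}(\pi_{\MM(U_i)}(x),\pi_{\MM(U_i)}(y))\Bigr)\times\{\pi_{\MM(U^c)}(y)\},$$
whose factors are \emph{entire} $\Sigma$--hulls in the components $U_i$ of a proper subsurface $U$, not members of some cover $\Gamma_{x_W,y_W}$ at lower complexity. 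Consequently the inductive hypothesis is not the Inductive Bound Lemma itself in the subsurfaces---which would require each $\GG_W$ to be a single piece of a lower-complexity cover, something that is neither true nor needed---but rather the subsurface version (\ref{subsurface polynomial bound}) of Theorem \ref{polynomial bound}, i.e.\ the full cardinality bound $\#\Sigma_{\ep,W}\le c\,\diam(\Sigma_{\ep,W})^{\xi'(W)}$ for proper connected $W$. The global logic is that Theorem \ref{polynomial bound} is proved by induction on $\xi'$, with annuli (where $\MM(W)=\Z$ and the hull is an interval) as the base case, and Lemma \ref{inductive bound} is the step of that induction which invokes the theorem at strictly lower complexity. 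Once the hypothesis is identified correctly, your computation goes through verbatim: $\#\GG(U,x,y)\le \prod_i C\,\diam(\Sigma_{\ep,U_i})^{\xi'(U_i)} \le b_1\diam(\GG)^{\xi'(U)}\le b_1\diam(\GG)^{\xi(S)}$, using $\xi'(U)=\sum_i\xi'(U_i)\le\xi'(S)=\xi(S)$ and $\diam(\GG)\ge 1$.

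A secondary point: your base case at $\xi(S)=1$ via hyperbolicity of $\MCG(S)$ is unnecessary. Since the lemma only ever invokes the bound for \emph{proper} subsurfaces of $S$, the true base case is the annulus ($\xi'=1$, $\MM(W)\cong\Z$), which is immediate; the case $\xi(S)=1$ of the theorem is then recovered from the general inductive step rather than argued separately.
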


The second lemma implies that the collection $\Gamma$
coarsely covers $\Sigma_\ep(x,y)$:

\begin{lemma}{mu near G}{\rm (Covering Lemma).}
For each $\mu\in\Sigma_\ep(x,y)$, 
\begin{equation*}
\mu\in\NN_{b_2}(\gamma(\mu)).
\end{equation*}
where $b_2$ depends only on $S$.  
\end{lemma}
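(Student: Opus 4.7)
The plan is to design $\Gamma$ and $\gamma$ so that $\gamma(\mu)$ tautologically contains a marking within bounded distance of $\mu$; the Covering Lemma then reduces to controlling this approximation by a single elementary-move computation.

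\textbf{Construction of $\Gamma$ and $\gamma$.} I would pick a ``pivot'' curve $\alpha$ lying on a $\CC(S)$--geodesic $[x,y]_S$ near its midpoint, and set $\Delta=\{\alpha\}$ with resulting subsurfaces $U,U^c$ satisfying $\xi(U)+\xi'(U^c)=\xi(S)$. By Lemma \ref{product regions}, $\QQ(\Delta)$ is quasi-isometric to $\MM(U)\times\MM(U^c)$ via the product of marking projections. Partition the image $\pi_{\MM(U)}\bigl(\QQ(\Delta)\cap\Sigma_\ep(x,y)\bigr)$ into unit-diameter pieces $\{B_i\}_i$ and set
\[
\GG_i=\bigl\{\mu'\in\Sigma_\ep(x,y):\exists\,\mu^{\ast}\in\QQ(\Delta)\text{ with }d(\mu',\mu^{\ast})\le C\text{ and }\pi_{\MM(U)}(\mu^{\ast})\in B_i\bigr\},
\]
for a suitable constant $C$, letting $\Gamma=\{\GG_i\}$. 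For $\mu\in\Sigma_\ep(x,y)$, define $\gamma(\mu)=\GG_{i(\mu)}$, where $i(\mu)$ indexes the ball containing $\pi_{\MM(U)}(\mu^{\ast})$ for some closest representative $\mu^{\ast}\in\QQ(\Delta)$.

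\textbf{Proof of the Covering Lemma.} With this construction, it suffices to produce, for each $\mu\in\Sigma_\ep(x,y)$, a marking $\mu^{\ast}\in\QQ(\Delta)$ with $d(\mu,\mu^{\ast})$ bounded uniformly in $\ep$ and $S$, since such a $\mu^{\ast}$ lies in $\gamma(\mu)$ and witnesses $\mu\in\NN_{b_2}(\gamma(\mu))$. Because $\mu\in\Sigma_\ep(x,y)$, the projection $\pi_S(\mu)$ lies within $\ep$ of $[x,y]_S$, hence within uniformly bounded $\CC(S)$--distance of $\alpha$. The base of $\mu$ therefore contains a curve at bounded $\CC(S)$--distance from $\alpha$, and a uniformly bounded sequence of elementary marking moves exchanges it for $\alpha$ itself, producing the required $\mu^{\ast}\in\QQ(\Delta)$.

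\textbf{Main obstacle.} The main difficulty is uniformity when $d_S(x,y)$ is small, so that no useful midpoint $\alpha\in[x,y]_S$ exists. In that case the pivot must instead come from a dominating subsurface $V$ --- a $\prec$--maximal element of $\FF_k(x,y)$ provided by Lemma \ref{partial order} --- with $\alpha$ chosen on a $\CC(V)$--geodesic $[x,y]_V$ and $\Delta$ replaced by $\boundary V\cup\{\alpha\}$. Checking that $\pi_V(\mu)$ lies near $[x,y]_V$ for every $\mu\in\Sigma_\ep(x,y)$ (so the swap argument still applies inside $\CC(V)$), and invoking the Consistency Theorem to splice the local correction back into a global marking $\mu^{\ast}$ satisfying the required distance bound, is the technical heart of the covering statement.
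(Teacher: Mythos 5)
There is a genuine gap, and it sits at the central step of your argument. You claim that because $\pi_S(\mu)$ is within bounded $\CC(S)$--distance of the pivot $\alpha$, ``a uniformly bounded sequence of elementary marking moves exchanges'' a base curve of $\mu$ for $\alpha$, producing $\mu^\ast\in\QQ(\Delta)$ with $d(\mu,\mu^\ast)$ uniformly bounded. This is false: by the quasidistance formula (Theorem \ref{distance formula}), $d_{\MM(S)}(\mu,\QQ(\alpha))$ is coarsely $\sum_{W\pitchfork\alpha}\truncate{d_W(\mu,\alpha)}{A}$, and these subsurface terms can be arbitrarily large even when $d_S(\mu,\alpha)\le 2$ (e.g.\ $\mu$ may differ from any marking containing $\alpha$ by a huge twist about a curve crossing $\alpha$, or $\pi_W(\mu)$ may sit far along $[x,y]_W$ for some $W$ meeting $\alpha$). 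So a single pivot chosen in advance cannot absorb every $\mu\in\Sigma_\ep(x,y)$ to within bounded distance; the difficulty is not only the small-$d_S(x,y)$ case you flag at the end. (A secondary problem: partitioning only the $\MM(U)$--coordinate into unit pieces while leaving the $\MM(U^c)$--coordinate free makes $\sum_i\diam(\GG_i)$ behave like a \emph{product} of the two factor diameters, which would break the diameter sum bound of Lemma \ref{G diameter sum}.)

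The paper's construction is necessarily $\mu$--dependent, and the Covering Lemma is proved for that construction. For each $\mu$ one forms $\FF_3(\mu,y)$ (projections from $\mu$ to $y$, not from $x$ to $y$), lets $U$ be the union of inclusion-maximal $\prec_2$--minimal elements, and sets $\gamma(\mu)=\GG(U,x,y)\subset\QQ(\boundary U)$ (or a two-point set $\GG(p,y)$ on the tight geodesic when $\mu$ is far from $\boundary U$ in $\CC(S)$). The distance $d(\mu,\GG(U,x,y))$ is then estimated (Lemma \ref{distance to G}) by the terms $d_W(\mu,\boundary U)$ for $W\pitchfork\boundary U$ and $d_W(\mu,y)$ for $W\subseteq U^c$, and each such term is bounded precisely because a large term would produce an element of $\FF_3(\mu,y)$ preceding, or disjoint from, $U$, contradicting the $\prec_2$--minimality and inclusion-maximality of $U$. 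This order-theoretic choice of the ``first obstructions between $\mu$ and $y$'' is the idea your proposal is missing; the $\GG(p,y)$ case additionally requires footprint arguments on the tight geodesic, which have no counterpart in your sketch.
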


Finally, we will bound the sum of the diameters of the sets in
$\Gamma$:

\begin{lemma}{G diameter sum}{\rm (Diameter sum bound).}
For a constant $b_3$ depending only on~$S$, 
\begin{equation}\label{sum of diameters}
\sum_{\GG\in\Gamma_{x,y}} \diam(\GG) \le b_3\  \diam(\Sigma_\ep(x,y))
\end{equation}
\end{lemma}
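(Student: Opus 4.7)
The plan is to construct $\Gamma_{x,y}$ using the hierarchy between $x$ and $y$, then prove the Diameter Sum Bound via a footprint counting argument. Fix a hierarchy $H=H(x,y)$ with main tight geodesic $[x,y]_S$ (Lemma~\ref{hierarchy facts}). For each simplex $v$ of $[x,y]_S$, let $\GG_v$ be the subset of the product region $\QQ(v)$ which, under the quasi-isometry $\QQ(v) \to \prod_{W\in\sigma(v)} \MM(W)$ from Lemma~\ref{product regions}, corresponds to $\prod_{W \in \sigma(v)} \Sigma_{\ep'}(\pi_{\MM(W)}(x),\pi_{\MM(W)}(y))$ for an $\ep' \ge \ep$ fixed uniformly. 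Set $\Gamma_{x,y} = \{\GG_v : v \in [x,y]_S\}$.

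Granting this choice, Lemma~\ref{Sigma properties} applied in each factor together with the quasidistance formula (Theorem~\ref{distance formula}) inside each $\MM(W)$ gives
$$
\diam(\GG_v) \lesssim \sum_{W \in \sigma(v)} d_{\MM(W)}(x,y) \lesssim \sum_{W \in \sigma(v)} \sum_{V \subseteq W} \truncate{d_V(x,y)}{A},
$$
with constants depending only on $\xi(S)$ and $\ep$. The key step is to swap the order of summation over $v$ and $V$. For a fixed proper subsurface $V \subsetneq S$, the condition $V \subseteq W \in \sigma(v)$ is equivalent to $V$ being disjoint from every curve of $v$, i.e., $v$ lies in the footprint $\phi_{[x,y]_S}(V)$. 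Since $[x,y]_S$ is tight, this footprint is an interval of at most three consecutive simplices, so each $V$ is counted at most $3$ times, yielding
$$
\sum_{v} \diam(\GG_v) \lesssim \sum_{V \subsetneq S} \truncate{d_V(x,y)}{A} \lesssim d(x,y) \le \diam(\Sigma_\ep(x,y)).
$$

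The principal obstacle is that each of the $d_S(x,y)+1$ simplices on $[x,y]_S$ contributes a bounded additive error from both Lemma~\ref{product regions} and the quasidistance formula, and naively these accumulate to something proportional to $d(x,y)$; one must verify they fold into the multiplicative bound. This is possible because $d(x,y) \le \diam(\Sigma_\ep(x,y))$ (both contain $x$ and $y$), so a large-enough $b_3$ absorbs the cumulative error, with the degenerate case $x = y$ handled trivially. A secondary technical point is to ensure the product region quasi-isometry provides two-sided control on $\diam(\GG_v)$; this follows from the $\ell^1$ metric on $\prod_W \MM(W)$ combined with Lemma~\ref{Sigma properties}.
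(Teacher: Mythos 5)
There is a genuine gap: the lemma is a statement about the \emph{specific} collection $\Gamma_{x,y}$ constructed earlier in the proof of Theorem \ref{polynomial bound}, and you have silently replaced it with a different one. The paper's $\Gamma_{x,y}$ is built from the points $\mu\in\Sigma_\ep(x,y)$: when $\FF_3(\mu,y)\ne\emptyset$ and $\mu$ is $\CC(S)$--close to $\boundary U$, where $U$ is the (possibly disconnected) union of the inclusion-maximal $\prec_2$--minimal subsurfaces of $\FF_3(\mu,y)$, one takes $\gamma(\mu)=\GG(U,x,y)\subset\QQ(\boundary U)$; only in the remaining cases does one take a set $\GG(p,y)$ indexed by a simplex of $[x,y]_S$, and those are defined to be two-point sets, not full products of $\Sigma$--hulls. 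Your collection $\{\GG_v: v\in[x,y]_S\}$ is not this collection, so your argument proves a different statement. Moreover the substitution cannot be repaired at the level of the whole theorem, because the three lemmas must hold for a common $\Gamma$: your collection fails the Covering Lemma \ref{mu near G}. A point $\mu$ sitting partway through large relative twisting/translation in two disjoint subsurfaces $U_1,U_2$ whose footprints on $[x,y]_S$ are disjoint is far from every $\QQ(v)$ with $v$ a simplex of $[x,y]_S$ (for any $v$ near $\pi_S(\mu)$, some $U_i$ crosses $v$ and $d_{U_i}(\mu,v)\approx d_{U_i}(x,y)/2$ is unbounded, while for $v$ far from $\pi_S(\mu)$ the $\CC(S)$--term already blows up). The sets $\GG(U,x,y)$ exist precisely to cover such points.

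Relatedly, your proof sidesteps the actual content of the paper's argument. Once the genuine $\Gamma_{x,y}$ is in play, the diameter sum over the $\GG(U,x,y)$ is again controlled by $\sum_{V\subsetneq S}\truncate{d_V(x,y)}{A}$, but the multiplicity question becomes: for a fixed $V$, how many distinct $U$'s occurring in $\Gamma$ contain $V$? These $U$'s are not disjoint from a common simplex of $[x,y]_S$, so the ``footprint has at most three simplices by tightness'' count does not apply. The paper instead shows (Lemma \ref{footprint at ends}, using Lemma \ref{hierarchy facts} and the $\prec_2$--minimality of the components $U_1$) that each such $U_1$ sits at the end of a chain of component domains of a hierarchy $H(x,y)$ whose footprints are pinned near endpoints of their supporting geodesics, and then counts chains inductively. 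That is the step your proposal is missing. Your bookkeeping of the additive errors via $d(x,y)\le\diam(\Sigma_\ep(x,y))$ is fine as far as it goes, but it is attached to the wrong cover.
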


The proof of Theorem \ref{polynomial bound} is then an inductive
argument. First we note that the statement of the theorem can be made
not just for $S$ but for any connected subsurface $W$ of $S$, and that
in order to correctly account for annuli we should write the
inequality as
\begin{equation}\label{subsurface polynomial bound}
\#\Sigma_{\ep,W}(x,y) \le c (\diam(\Sigma_{\ep,W}(x,y)))^{\xi'(W)}
\end{equation}
where $x,y\in\MM(W)$ and $\Sigma_{\ep,W}$ denotes the $\Sigma$-hull
within $\MM(W)$. Recall that $\xi'(W)=\xi(W)$ for connected
non-annular $W$, as in Section \ref{background}. For an annulus $W$,
$\xi'(W)=1$,  $\MM(W)$ is $\Z$, and
$\Sigma_{\ep,W}(x,y)$ is just the interval between $x$ and $y$. So
this establishes the base
case of (\ref{subsurface polynomial bound}).

We will establish the three lemmas for complexity $\xi(S)$,
where Lemma \ref{inductive bound} in particular will rely on assuming
(\ref{subsurface polynomial bound}) for all smaller complexities. 
Once that is done, for uniform constants $c_i$ we have
\begin{align*}
\#\Sigma_\ep(x,y) & \le c_1 \sum_{\GG\in\Gamma}\#\GG \\
 &\le c_2\sum_{\GG\in\Gamma}\diam(\GG)^{\xi(S)} \\ 
 & \le c_2 \left(\sum_{\GG\in\Gamma}\diam(\GG)\right)^{\xi(S)}\\
& \le c_3 \diam(\Sigma_\ep(x,y))^{\xi(S)}.
\end{align*}
where the first line follows from
Lemma \ref{mu near G}, together with a bound on the cardinality of 
$b_2$--balls in $\MM(S)$;
the second line follows from 
Lemma \ref{inductive bound}; the third from arithmetic; and the last from
Lemma \ref{G diameter sum}.

This gives the proof of Theorem \ref{polynomial bound}, modulo the
construction of $\Gamma$ and proofs of the three lemmas.

\subsubsection*{Construction of the cover}
Let $U$ be a (possibly disconnected) nonempty essential subsurface,
with its components denoted $U_1,\ldots,U_k$.
Recall from Section \ref{background} that 
$\MM(U) = \prod_i \MM(U_i)$, and define
$$
\Sigma_{\ep,U}(x,y) = \prod_i
\Sigma_{\ep,U_i}(\pi_{\MM(U_i)}(x),\pi_{\MM(U_i)}(y)).
$$
Now recalling from Lemma \ref{product regions} that $\QQ(\boundary U)$
is identified quasi-isometrically with $\MM(U) \times\MM(U^c)$, 
define 
$\GG(U,x,y) \subset \QQ(\boundary U)$ to be the set:
$$
\GG(U,x,y) =  \Sigma_{\ep,U}(x,y)
\times \{\pi_{\MM (U^c)}(y)\}. 
$$

We also need a degenerate form of this: if $p$ is a simplex in
$\CC(S)$, Lemma \ref{product regions} tells us that $\QQ(p)$ is
quasi-isometrically identified with 
$\prod_{W\in\sigma(p)} \MM(W)$, where $\sigma(p)$ is the decomposition
associated to $p$. Let $y_p$ denote the point corresponding to the tuple
$(\pi_{\MM(W)}(y))_{W\in\sigma(p)}$, and let $y'_p$ be
  an additional point at distance $1$ from $y_p$. We define
  $$\GG(p,y) = \{y_p,y'_p\}.$$
The second point is included for the technical purpose of making
 $\diam(\GG(p,y))$ equal to 1 instead of 0. Note that $\GG(p,y)$ is
{\em not} the same as $\GG(U,x,y)$ even when $U$ is a union of annuli
with cores comprising $p$.

Now we will construct our particular collection $\Gamma$ of sets of
this type, together with the map $\gamma\co\Sigma_\ep(x,y) \to \Gamma$. 
Let $\mu\in\Sigma_\ep(x,y)$, and 
let $\FF=\FF_3(\mu,y)$ be as in \S\ref{background}, taking  
$c=\ep+B+\frac{m_{4}}{2}+c_{0}$ where $B$, $c_{0}$, and $m_{4}$ are 
the constants from Lemmas~\ref{geodesic 
projection}, \ref{partial order},  and~\ref{hierarchy facts}. 
Lemma \ref{partial order} says that the relation $\prec_2$ is a partial
order on $\FF$. 
Assuming $\FF\ne\emptyset$, among all $\prec_2$--minimal subsurfaces,
consider the set $U$ of those 
that are maximal with respect to inclusion. Then $U$ is a union of
disjoint essential subsurfaces. 

Fix a constant $a>2\ep$.
Suppose that $d_{\CC_1(S)}(\mu,\boundary U) \le a.$
Then we let $\gamma(\mu) = \GG(U,x,y)$. 

Suppose that $d_{\CC_1(S)}(\mu,\boundary U) > a$.
Let $q(\mu)$ be the nearest point to $\mu$ on the tight geodesic
$[x,y]_S = h_{x,y,S}$ (see \S\ref{background} and Lemma~\ref{hierarchy facts}.)
In particular $d_S(\mu,q) \le \ep$ since $\mu\in\Sigma_\ep(x,y)$. 
Let $p(\mu)$ be a simplex along the segment $[q,\pi_S(y)]\subset [x,y]_S$ which is at distance 
$a/2$ from $q$. Let $\gamma(\mu) = \GG(p,y)$.

If $\FF = \emptyset$, define $q$ and  $p$ as above, unless $d(q,y) <
a/2$ in which case let $p$ be the last simplex of 
$h_{x,y,S}$ (i.e., a subset of
$y$). Again let $\gamma(\mu) = \GG(p,y)$.

We let $\Gamma$ be the set of all $\gamma(\mu)$ thus obtained. 

\subsubsection*{Inductive Bound Lemma}
We now prove Lemma \ref{inductive bound}, relating cardinality to
diameter for $\GG\in\Gamma$. In this proof we will assume by induction
that Theorem \ref{polynomial bound}, or rather its subsurface version
(\ref{subsurface polynomial bound}), 
holds for all connected proper subsurfaces of $S$. 

If $\GG = \GG(p,y)$ then $\#\GG=2$ and $\diam(\GG)=1$,
so we are done.  Now consider $\GG=\GG(U,x,y)$. 
In each component $U_i$ of $U$ we have
$$
\#\Sigma_{\ep,U_i}(x,y) \le C
\diam(\Sigma_{\ep,U_i}(x,y))^{\xi'(U_i)}
$$
by (\ref{subsurface polynomial bound}).
Now since $\GG(U,x,y)$ is a
product of such sets over the components of $U$ and $\xi'$ is additive,
the bound follows with exponent $\xi'(U)$. Since $\xi'(U) \le \xi'(S)=\xi(S)$, 
Lemma \ref{inductive bound} follows. 

\subsubsection*{Covering Lemma}
We next prove Lemma \ref{mu near G}, which says that 
each $\mu\in\Sigma_\ep(x,y)$ is uniformly close to 
$\gamma(\mu)$. 

We estimate $d(\mu,\GG(U,x,y))$  via the following lemma:
\begin{lemma}{distance to G}
Let $\mu\in\Sigma_\ep(x,y)$ and $U$ a (possibly disconnected) subsurface. 
$$
d(\mu,\GG(U,x,y)) \approx \sum_{W\subseteq U^c} \truncate{d_W(\mu,y)}{L}
+ \sum_{W\pitchfork\boundary U} \truncate{d_W(\mu,\boundary U)}{L}
$$
for a uniform choice of constants.
\end{lemma}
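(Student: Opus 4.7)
\begin{pf*}{Proof plan for Lemma \ref{distance to G}}
The plan is to apply the quasidistance formula (Theorem~\ref{distance formula}) to both $d(\mu,\nu)$ for a well-chosen $\nu\in\GG(U,x,y)$ (for the upper bound) and for any $\nu\in\GG(U,x,y)$ (for the lower bound), and to partition the essential subsurfaces $W\subseteq S$ appearing in the quasidistance sum into three classes: (i) $W\subseteq U$, (ii) $W\subseteq U^c$, and (iii) $W\pitchfork \boundary U$. Up to isotopy, every essential $W$ lies in exactly one of these classes, so once the threshold $L$ is chosen large enough relative to all the coarse-Lipschitz and coarse-commutativity constants in play, the three sums combine to give the claimed $\approx$.

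For the upper bound, I would build a specific $\nu\in\GG(U,x,y)$ close to $\mu$. Since $\mu\in\Sigma_\ep(x,y)$, for each component $U_i$ of $U$ and each subsurface $W\subseteq U_i$ we have $d_W(\mu,[x,y]_W)\le \ep$; by coarse commutativity of the diagram
\[
\xymatrix{\MM(S)\ar[r]^{\pi_W}\ar[d]_{\pi_{\MM(U)}} & \CC(W)\\ \MM(U)\ar[ur]_{\pi_W} &}
\]
the point $\pi_{\MM(U)}(\mu)$ lies within uniformly bounded distance of $\Sigma_{\ep,U}(x,y)$. Choose $\nu_U\in\Sigma_{\ep,U}(x,y)$ within bounded distance of $\pi_{\MM(U)}(\mu)$ and set $\nu=(\nu_U,\pi_{\MM(U^c)}(y))\in\GG(U,x,y)$. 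Now apply the quasidistance formula to $d(\mu,\nu)$. For $W\subseteq U$, $d_W(\mu,\nu)$ is uniformly bounded and so contributes nothing above threshold. For $W\subseteq U^c$, $\pi_W$ factors coarsely through $\pi_{\MM(U^c)}$, so $d_W(\mu,\nu)\simadd d_W(\mu,y)$. For $W\pitchfork\boundary U$, the curve system $\boundary U$ sits in the base of $\nu$, so $d_W(\nu,\boundary U)$ is bounded by a uniform constant, yielding $d_W(\mu,\nu)\simadd d_W(\mu,\boundary U)$. Summing gives the required upper bound on $d(\mu,\GG(U,x,y))$.

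For the lower bound, fix any $\nu\in\GG(U,x,y)$. By construction, $\pi_{\MM(U^c)}(\nu)=\pi_{\MM(U^c)}(y)$ and $\boundary U$ lies in the base of $\nu$. The same two calculations as above then yield $d_W(\mu,\nu)\simadd d_W(\mu,y)$ for $W\subseteq U^c$ and $d_W(\mu,\nu)\simadd d_W(\mu,\boundary U)$ for $W\pitchfork \boundary U$. The terms with $W\subseteq U$ in the quasidistance formula are nonnegative, so discarding them only decreases the sum. Choosing $L$ larger than all additive constants, the two indexed sums on the right-hand side of the lemma appear as a subset of the quasidistance expansion of $d(\mu,\nu)$, with uniformly bounded multiplicative and additive error, giving the lower bound.

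The main obstacle I expect is the third class $W\pitchfork\boundary U$: I need the relation $d_W(\nu,\boundary U)=O(1)$ to hold uniformly, which uses only the fact that $\boundary U$ is in the base of $\nu$ and that $\pi_W$ of a marking is within uniform distance of the $\pi_W$ of any curve in its base, together with the $\pitchfork$ hypothesis to ensure $\pi_W(\boundary U)$ is well-defined. Coordinating this with the threshold $L$ in the quasidistance formula, so that the additive slop never dominates a surviving term, is the principal bookkeeping step; everything else is a direct consequence of Theorem~\ref{distance formula}, Lemma~\ref{product regions}, and coarse commutativity of the subsurface projections.
\end{pf*}
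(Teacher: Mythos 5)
Your proposal is correct and follows essentially the same route as the paper: both construct a point of $\GG(U,x,y)$ by pairing (a point near) $\pi_{\MM(U)}(\mu)$ with $\pi_{\MM(U^c)}(y)$, split the subsurfaces in the quasidistance formula into the three classes $W\subseteq U$, $W\subseteq U^c$, $W\pitchfork\boundary U$, kill the first class by raising the threshold, substitute $d_W(\mu,y)$ and $d_W(\mu,\boundary U)$ in the other two, and obtain the lower bound by noting these same terms must appear for an arbitrary point of $\GG(U,x,y)$. The one detail you handle slightly differently—choosing $\nu_U\in\Sigma_{\ep,U}(x,y)$ near $\pi_{\MM(U)}(\mu)$ rather than showing $\pi_{\MM(U_i)}(\mu)$ itself lies in $\Sigma_{\ep',U_i}(x,y)$ via coarse commutativity—is cosmetic.
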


\begin{proof}
For any $\mu\in\MM(S)$, define $\tau(\mu) \in \GG(U,x,y)$ as
follows. In view of Lemma \ref{product regions},  to describe
$\tau\in\QQ(\boundary U)$ we must simply give 
its restrictions $\pi_{\MM(V)}(\tau)$ to each component $V$ of $U$ and of
$U^c$. 

Hence, for each component $U_i$ of $U$, let $\pi_{\MM(U_i)}(\tau)
\equiv \pi_{\MM(U_i)}(\mu)$. For each component $V$ of $U^c$, let
$\pi_{\MM(V)}(\tau) \equiv \pi_{\MM(V)}(y)$. 

If $\mu\in\Sigma_\ep(x,y)$ then $\pi_{W}(\mu)$ is within $\ep$ of
$[x,y]_{W}$ for each $W$,  and hence for $W\subset U_i$
the same is true (perhaps with a change of
$\ep$) for $\pi_{W}(\tau)$, since $\pi_{\MM(U_i)}$ and $\pi_W$
(coarsely) commute. 

It follows that $\pi_{\MM(U_i)}(\tau)\in \Sigma_{\ep',U_i}(x,y)$. 
Thus, since $\ep'-\ep$ is uniformly bounded, the quasidistance formula 
yields 
that $\tau(\mu)$ is a uniformly bounded distance from $\GG(U,x,y)$; 
for the coarse measurements we make below it is no loss of 
generality to assume $\tau(\mu)\in\GG(U,x,y)$. 

Now the quasidistance formula gives
$$
d(\mu,\tau(\mu)) \approx \sum_W \truncate{d_W(\mu,\tau)}L
$$
but we notice that, for all $W\subset U_i$, the corresponding terms
are uniformly bounded. Thus by choosing $L$ sufficiently large those
terms disappear (at the expense of changing the constants implicit in
the ``$\approx$''). 

If $W\subseteq U^c$ then $d_W(\mu,\tau)$ is estimated by
$d_W(\mu,y)$ up to bounded error, so again possibly choosing $L$
larger we can replace one by the other at a bounded cost in the
constants. Finally, if $W\pitchfork \boundary U$ then, since $\tau$
contains $\boundary U$, we can replace those terms by
$d_W(\mu,\boundary U)$. (See \cite{BKMM} for other examples of this
type of argument). 

This gives an upper bound for $d(\mu,\GG(U,x,y))$ of exactly the type
in the lemma. To get the lower bound we observe that for {\em any} point
in $\GG(U,x,y)$ the terms of the given type must appear in its
quasidistance formula. 
\end{proof}

Consider the case that $\gamma(\mu) = \GG(U,x,y)$. 
To bound $d(\mu,\gamma(\mu))$ we must control both types of terms
that appear in Lemma~\ref{distance to G}.

In the case $W\pitchfork \boundary U$, notice that if 
$d_W(\mu,\boundary U)$ is sufficiently large, then by 
Lemma~\ref{four inequalities} it follows that 
$W\in \FF=\FF_3(\mu,y)$ and
$W$ precedes $U$ in the $\prec_2$--order, which 
contradicts the minimality of $U$. 
For the second, we see that if $d_W(\mu,y)$ is sufficiently large then
again $W$ would belong to $\FF$, and since it is disjoint from $U$, there
would be a $\prec_2$--minimal element disjoint from $U$, which again
contradicts the choice of $U$. 

This uniformly bounds all the terms in Lemma \ref{distance to G}, and
hence gives a uniform bound on $d(\mu,\gamma(\mu))$. (Again, this is
done by increasing the threshold past the uniform bound so that all
the terms disappear; all that is left is the additive error in
``$\approx$''.)

\medskip

Now consider the case that $\gamma(\mu) = \GG(p,y)$ with $p$ a
simplex in $[x,y]_S$. Recall this means that $\FF$ is either
empty, or its $\prec_2$--minimal elements are at $\CC(S)$--distance at least $a$ from
$\mu$. 

In fact  a bit more is true. If
$W\in\FF_3(\mu,y)$ then $d_W(x,y)>3c-\ep$, because
$\mu\in\Sigma_\ep(x,y)$ and hence $\pi_W(\mu)$ is $\ep$--close to 
$[x,y]_W$. Since $3c-\ep>2c>B$, the geodesic projection lemma (\ref{geodesic
  projection}) implies some of the simplices of $[x,y]_S$ are 
  disjoint from $W$ --- in other words the 
{\em footprint}, denoted $\phi_{[x,y]_S}(W)$ as in \S\ref{background}, 
is nonempty.

Let $W,V\in\FF$, $W\pitchfork V$,  and suppose that $\phi_{[x,y]_S}(W)$ 
is disjoint from
and to the \emph{right} of $\phi_{[x,y]_S}(V)$ 
(i.e., $\phi_{[x,y]_S}(W)$ 
lies on $[x,y]_{S}$ closer to $y$ then $\phi_{[x,y]_S}(V)$). We claim this implies
$V\prec_2 W$. Indeed, letting $t\in \phi_{[x,y]_S}(V)$, the segment
$[x,t]_S$ consists of curves intersecting $W$, and by Lemma
\ref{geodesic projection} $d_W(x,t) \le B$. Since $t$ and $\boundary
V$ are disjoint, $d_W(x,\boundary V) \le B < 2c$, so $W\not\prec_2 V$.
Since by Lemma \ref{partial order} they are $\prec_2$--ordered,
$V\prec_2 W$.  Equivalently, we can say that if 
$W\prec_2 V$, then 
$\phi_{[x,y]}(V)$ either intersects or is to the right of  
$\phi_{[x,y]}(W)$. (These are essentially variations on arguments in
\cite{MasurMinsky:complex2}.)

Now, if $U$ is $\prec_2$--minimal in $\FF$, then 
$\boundary U$  is at least $a$ from $\mu$; hence 
its footprint is at least $a-\ep$
either to the left or to the right of $q(\mu)$ (recall $d_S(\mu,q)\le
\ep$). 
If it were on the left, since $a> 2\ep$, using the Lipschitz property
of $\pi_U$ as earlier,  we would
find that $d_U(\mu,q) \le 3\ep.$ Lemma \ref{geodesic projection} would
give us $d_U(q,p)< B$, but this bounds $d_U(\mu,p)$ and contradicts
$U\in\FF$. We conclude the footprint is to the right of $q$.

By the previous paragraph on ordering, we conclude since $U$ is
$\prec_2$--minimal that 
{\em all} elements of $\FF$ have footprints at least distance
$a-\ep$ to the right of 
$q$, and in fact to the right of $p$ since $d_S(p,q) \le a/2$. 

\medskip

To get a bound on $d(\mu,\GG(p,y))$ we must again bound the terms
from the quasidistance formula,  i.e.,
$d_W(\mu,\GG(p,y))$ for $W\subseteq S$. 

Let  $W\subset S$ be a proper connected subsurface. 
If $W$ is disjoint from $p$, then $\boundary W$ is within $\CC(S)$--distance
$a/2+1$ from $q$, and hence no more than $a$ from $\mu$. 
It follows that $W$ is not in $\FF$, and hence
$d_W(\mu,y)$ is bounded. By construction, the projection of
$\GG(p,y)$ to $W$ is the projection of $y$, so this gives us the
desired bound.

If $W$ intersects $p$, we must bound $d_W(\mu,p)$. 
We claim that if $d_W(\mu,p)$ is sufficiently large, then
$\phi_{[x,y]_S}(W)$ is nonempty. For if it were empty, Lemma
\ref{geodesic projection} would bound $\diam_W([x,y]_S)$, and since
$\pi_W(\mu)$ is within $\ep$ of $[x,y]_W$, this would bound $d_W(\mu,p)$ as
well.  Hence we may assume
$\phi_{[x,y]_S}(W)$ is nonempty, and so 
lies either to the right
or the left of $p$. 

If it is on the left, then by the previous discussion $W$ cannot be in
$\FF$, and so $d_W(\mu,y)$ is bounded. Moreover
since the footprint is outside
of $[p,y]_S$, Lemma \ref{geodesic projection} gives a bound on
$d_W(\mu,p)$ as well. 

If it is on the right, then its distance from $q$ is at least $a/2$,
and so we have a bound on $d_W(\mu,q)$, by the Lipschitz property of
$\pi_W$, and on $d_W(q,p)$, by Lemma \ref{geodesic projection}. 
Hence we obtain a bound on $d_W(\mu,p)$. 

The only case left is that $W=S$. However, we have already noted that
$d_S(\mu,q) \le \ep$ and hence $d_S(\mu,p) \le \ep+ a/2$. 

This completes the proof of Lemma \ref{mu near G}.

\subsubsection*{Diameter sum bound}
Our final step is to prove Lemma \ref{G diameter sum}, 
bounding the diameter sum over $\Gamma$. 

First, consider the members of $\Gamma$ of the form $\GG(U,x,y)$. Each
of these has diameter 
comparable with $d_{\MM(U)}(\pi_{\MM(U)}(x),\pi_{\MM(U)}(y))$, which by the quasidistance formula
is estimated by
$$
\sum_{V\subseteq U}\truncate{d_{V}(x,y)}{A}
$$
for any sufficiently large $A$. Let us choose $A>2c$.
So the sum over all possible $\GG(U,x,y)$ should be comparable to the
sum of $\truncate{d_V(x,y)}{A}$ over all proper subsurfaces $V$ in $S$, provided we show
that each $V$ occurs in a bounded number of $U$'s.

Fix $V\subsetneq S$. 
Consider $\mu\in\Sigma_\ep(x,y)$ such that $\gamma(\mu) = \GG(U,x,y)$ with
$V\subset U$, and let $U_1$ be a component of $U$. 
In particular, $d_{\CC_1(S)}(\boundary U_1,\boundary V)\le 1$. 

We will control the number of possible such  $U_1$'s using a
hierarchy $H=H(x,y)$. 
Since $U_1\in\FF$ we have $d_{U_1}(\mu,y)>3c$, and hence $d_{U_1}(x,y)>3c-\ep>2c$ which 
by Lemma \ref{hierarchy facts} implies $U_1$ is a domain in $H$. 
Let $W$ be any other domain of a tight geodesic
$k=h_{x,y,W}$ in $H$ such that $U_1\subset W$. 
We claim: 
\begin{lemma}{footprint at ends}
The footprint $\phi_k(U_1)$ is a uniformly bounded distance in
$\CC_1(W)$ from one of the endpoints of $k$. 
\end{lemma}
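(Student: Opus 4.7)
The plan is to argue by contradiction, using the $\prec_2$-minimality of $U_1$ in $\FF$ to rule out the footprint $\phi := \phi_k(U_1)$ being far from both endpoints of $k$. Suppose $\phi$ lies at $\CC_1(W)$-distance greater than some large threshold $L$ from both $t_x$ and $t_y$; I will produce a subsurface $V\in\FF$ with $V\prec_2 U_1$, contradicting the minimality of $U_1$.

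First I would locate the projection $\pi_W(\mu)$ relative to $\phi$. Since $\mu\in\Sigma_\ep(x,y)$, the vertex $\pi_W(\mu)$ lies within $\ep + O(1)$ of some vertex $v_\ell$ of $k$. If $v_\ell$ were strictly on the $t_y$-side of $\phi$, every vertex in the subsegment of $k$ from $v_\ell$ to $t_y$ would essentially intersect $U_1$, so Lemma \ref{geodesic projection} together with coarse commutativity of projections would bound $d_{U_1}(\mu,y)$ uniformly, contradicting $U_1\in\FF(\mu,y)$ for $L>3c+O(1)$. Hence $\pi_W(\mu)$ sits on the $t_x$-side of $\phi$ or inside $\phi$.

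Next I would construct $V$. Let $v_m$ be the last vertex of $k$ strictly before $\phi$ on the $t_x$-side; tightness of $k$ and adjacency of $v_m$ to $\phi$ force $v_m$ to essentially cross $\boundary U_1$, so some component $a$ of $v_m$ crosses $\boundary U_1$. I would take $V$ to be a connected essential subsurface of $W$ with $a\subset\boundary V$, chosen to be the domain of a subordinate geodesic of $k$ in the hierarchy $H$---a non-annular component of $W\setminus v_m$ when possible, and otherwise the annulus with core $a$. By construction $V\pitchfork U_1$. To see $V\prec_2 U_1$, one applies Lemma \ref{geodesic projection} to the subsegment of $k$ from $t_x$ to $v_m$ (all of whose vertices meet $U_1$), obtaining $\pi_{U_1}(v_m)$ within $B$ of $\pi_{U_1}(x)$; combined with $d_{U_1}(\mu,y)>3c$ and $\mu\in\Sigma_\ep(x,y)$ (so $d_{U_1}(x,y)>3c-\ep>2c$), this yields $d_{U_1}(y,\boundary V) \ge d_{U_1}(y,x) - O(1) > 2c$, which by Lemma \ref{four inequalities} gives $V\prec_2 U_1$.

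The main obstacle, and the final step, is to verify $V\in\FF$, i.e., $d_V(\mu,y)>3c$. This is where the hierarchy machinery enters essentially: because $v_m$ is adjacent to $\phi$ in the tight geodesic $k$, one shows via the Masur--Minsky subordination structure that $V$ is a domain of $H$, so $d_V(x,y)>m_4$ is built in; and the position of $\pi_W(\mu)$ on the $t_x$-side of $v_m$, combined with Behrstock's inequality (Lemma \ref{behrstock inequality}) and coarse commutativity of projections, forces $\pi_V(\mu)$ near the $x$-endpoint of the hierarchy geodesic $h_V$ up to uniform error, so the lower bound on $d_V(x,y)$ transfers to $d_V(\mu,y)$ once $L$ is taken large enough. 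With $V\in\FF$ and $V\prec_2 U_1$ both established, the $\prec_2$-minimality of $U_1$ is violated, completing the contradiction.
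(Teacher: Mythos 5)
Your reduction of the problem to the case where $\pi_W(\mu)$ lies on the $t_x$-side of the footprint is sound, and is essentially the first half of the paper's argument: if $d_W(\mu,y)$ is small then $\pi_W(\mu)$ is near $t_y$, and Lemma \ref{geodesic projection} forces the footprint to sit near $\pi_W(\mu)$, hence near the $y$-endpoint. The genuine gap is in your last step. For the relation $V\prec_2 U_1$ to contradict the $\prec_2$-minimality of $U_1$ you must first put $V$ into $\FF=\FF_3(\mu,y)$ (minimality is only asserted within $\FF$, and Lemma \ref{four inequalities} itself requires both subsurfaces to lie in $\FF$), and your argument for this is that ``$V$ is a domain of $H$, so $d_V(x,y)>m_4$ is built in.'' That reverses Lemma \ref{hierarchy facts}(5): the lemma says subsurfaces with $d_V(x,y)>m_4$ are domains of $H$, not conversely. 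A component domain of a simplex of a tight geodesic can perfectly well have $d_V(x,y)=O(1)$; nothing about $v_m$ being adjacent to the footprint produces a subsurface with large projection there, so your candidate $V$ need not lie in $\FF$ and the contradiction does not close. (A smaller issue: a vertex $v_m$ outside the footprint merely fails to be \emph{disjoint} from $U_1$; a component of $v_m$ could be an essential curve contained in $U_1$ rather than one crossing $\boundary U_1$, so the existence of your curve $a$ and the overlap $V\pitchfork U_1$ also need more care.)

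The paper exploits the minimality of $U_1$ in a different way that avoids manufacturing a new element of $\FF$. It splits on whether $d_W(\mu,y)\le 3c$. If so, the footprint is forced near the $y$-endpoint as above. If not, then $W$ itself lies in $\FF$, and since $U_1\subsetneq W$ is inclusion-maximal among $\prec_2$-minimal elements, $W$ cannot be $\prec_2$-minimal; hence some $W'\prec_2 W$ already exists in $\FF$. Minimality of $U_1$ forces $W'$ and $U_1$ to have disjoint boundaries (otherwise $U_1\prec_2 W'\prec_2 W$, impossible since $U_1\subset W$), so $\pi_W(\boundary W')$ lies within distance $3$ of $\pi_W(\boundary U_1)$, and Lemma \ref{four inequalities} applied to $W'\prec_2 W$ gives a uniform bound locating $\pi_W(\boundary W')$, hence the footprint. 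The key structural difference is that the witness comes from the non-minimality of the ambient surface $W$, not from a subsurface sitting at the left edge of the footprint; you should rework your final step along those lines.
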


\begin{proof}
Since the endpoints of $k$ are within $m_3$ of $\pi_W(x)$ and
$\pi_W(y)$ respectively, and $\phi_k(U_1)$ is within 1 of $\boundary
U_1$, it suffices to bound $d_W(\boundary U_1,x)$ or $d_W(\boundary
U_1,y)$.

Suppose first that $d_W(\mu,y)  \le 3c$. 
We claim in this case that $d_W(\boundary U_1,y)$ is at most $3c+2$. For
if not, then Lemma \ref{geodesic projection} can be applied to the
geodesic segment from $\pi_W(y)$ to $\pi_W(\mu)$, yielding
$d_{U_1}(\mu,y) \le B < 3c$, a contradiction with the definition of
$U_1$. Thus we are done in this case. 

Suppose now that $d_W(\mu,y)>3c$, so that $W$ is in
$\FF_3(\mu,y)$. 
It cannot be $\prec_2$--minimal,
because if it were then $U_1$ would not have been chosen ($U_1$ is
inclusion-maximal among $\prec_2$--minimal elements). Hence there is some
$W'\prec_2 W$.  If $W'\pitchfork U_1$, then they are 
$\prec_2$--ordered; but
since $U_1$ is $\prec_2$--minimal, we get $U_1\prec_2 W'$, and thus 
$U_1\prec_2 W$
which contradicts $U_1\subset W$. Hence $W'$ and $U_1$ have disjoint
boundaries, so $d_W(\boundary U_1,\boundary W') \le 3$ by the
coarse Lipschitz property of $\pi_W$. 
Now, since $W'\prec_2 W$, we have by Lemma \ref{four inequalities}
that  $d_W(x,\boundary W')$ 
is bounded, and so we get a bound on $d_W(x,\boundary U_1)$. This
completes the proof. 
\end{proof}

We now bound the number of possible $U_1$'s,  by induction. 
Using part (3) of Lemma \ref{hierarchy facts}, every $U_1$ is
contained in a chain $U_1=W_0\subset W_1 \subset\cdots\subset W_s=S$ such
that each  $W_i$ supports a geodesic in $H$, each $W_i$ for $i<s$ is a
component domain (complementary component or annulus) of a simplex in
$h_{x,y,W_{i+1}}$, and this simplex (being in the footprint) is a bounded
distance from one of the  endpoints of $h_{x,y,W_{i+1}}$ for $i+1<s$. For $i=s-1$, the
footprint is on $h_{x,y,W_s} = [x,y]_S$, and here it is constrained to
a bounded interval by the inequality $d_S(\boundary V,\boundary
U_1)\le 1$ (remember that we have fixed $V$).

Hence, starting with $W_s=S$ and working backwards, for each $W_i$
there is a uniformly bounded number of choices for $W_{i-1}$. We
conclude that there is a uniformly bounded number of choices for
$U_1=W_0$. 

\medskip

So now we have a uniform bound on the number of different
$\GG(U,x,y)$'s in $\Gamma$ for which $U$ contains a fixed subsurface
$V$. We conclude that 
\begin{equation*}
\sum_{\GG(U,x,y)\in\Gamma}
{\diam(\GG(U,x,y))} \le N \sum_{V\subsetneq S} \truncate{d_V(x,y)}{A}
\end{equation*}
for uniform $N$.

What remains in the left-hand-side of inequality (\ref{sum of diameters}) is the sum over the
$\GG(p,y)\in\Gamma$ where $p\in [x,y]_S$. Since for these, 
by definition, the diameters are all 1, this sum satisfies
\begin{equation*}
\sum_{\GG(p,y)\in\Gamma}{\diam(\GG(p,y))} \le  d_S(x,y)+1
\end{equation*}
Putting these together we have
\begin{equation*}
\sum_{\GG\in\Gamma} \diam(\GG) \le N'\sum_{V\subseteq S} \truncate{d_V(x,y)}{A}
\le N''\ d(x,y).
\end{equation*}
which establishes Lemma \ref{G diameter sum}, and so completes the
proof of Theorem \ref{polynomial bound}.
\end{proof}

\subsection{Proofs of the Main Theorems}
\label{lastsection}

To wrap up the proof of Theorem~\ref{centroid}: 
We showed that $K(x,y,r)$ is contained in $\Sigma_{\ep'}(x,y)$ and in
$\NN_{b_1r}(x)$, for uniform $\ep'$ and $b_1$. Lemma \ref{reduce to Sigma}
then implies that there exists $q\in\Sigma_\ep(x,y)$ such that
$d(x,q) \le b_2 r$, and 
$K(x,y,r)$ is contained in $\Sigma_{\ep'}(x,q)$. Finally, Theorem
\ref{polynomial bound} gives us a bound for $\#\Sigma_{\ep'}(x,q)$
which is polynomial (of degree $\xi(S)$) in $d(x,q)$. This gives the
desired bound for $\#K(x,y,r)$.

As mentioned in the introduction, Theorem~\ref{rapid decay} now follows 
by applying a result of Dru\c{t}u-Sapir 
\cite{DrutuSapir:RD}.
They showed that the Rapid Decay property holds for 
groups which are 
\emph{$(**)$--relatively hyperbolic with 
respect to the trivial group}. This property is said to hold when the 
following are satisfied: there exists a function 
$T\co G\times G\to G$ and a polynomial $Q(r)$ satisfying: 
\begin{enumerate}
    \item $T(g,h) = T(h,g)$

    \item $T(h^{-1},h^{-1}g) = h^{-1}T(h,g)$
    
    \item If $g\in G$ and $r\in\N$, then   
    $\#\{T(g,h): |h| = r\} < Q(r)$.

\end{enumerate}

For the mapping class group, we define $$T(g,h)=\kappa(1,g,h)\co 
\MCG(S)\times\MCG(S)\to \MCG(S)$$ where $\kappa$ is the 
centroid map as given by Theorem~\ref{centroid}. The first condition 
above follows from the property that $\kappa$ is invariant under 
permutation of its arguments 
(Theorem~\ref{centroid} part (\ref{centroid:perm})). The second condition holds 
since $\kappa(1,h^{-1},h^{-1}g)=h^{-1} \kappa(h,1,g) = 
h^{-1} \kappa(1,h,g)$, where the first equality is from the 
equivariance of $\kappa$ 
(Theorem~\ref{centroid} part (\ref{centroid:equivariant})). The third condition 
follows from our cardinality bound on centroids 
(Theorem~\ref{centroid} part (\ref{centroid:count})).

\medskip

Alternatively, Chatterji-Ruane in
\cite[Proposition~1.7]{ChatterjiRuane} give a similar criterion that implies
Rapid Decay. Their condition involves an equivariant family of subsets
$S(x,y)\subset G$, where $x,y\in G$, satisfying a number of conditions,
in particular
$$
S(x,y)\intersect S(y,z)\intersect S(x,z) \ne \emptyset
$$
and
$$
\#S(x,y) \le P(d(x,y))
$$
for a polynomial $P$. It is not hard to see that our $\Sigma$-hulls
give such a family, i.e., $S(x,y)\equiv \Sigma_\ep(x,y)$, and that
the existence of centroids gives the non-empty triple intersection property.

\providecommand{\bysame}{\leavevmode\hbox to3em{\hrulefill}\thinspace}

\end{document}